\newtheorem{theorem}{Theorem}
\begin{document}

\preprint{APS/123-QED}

\title{A distance function for stochastic matrices}

\author{Antony R. Lee}
\email{arl290@student.bham.ac.uk}
\author{Peter Ti\v{n}o}
\affiliation{School of Computer Science, University of Birmingham, Edgbaston, Birmingham, B15 2TT
}

\author{Iain B. Styles}
\affiliation{
 School of Electronics, Electrical Engineering, and Computer Science, Queen's University Belfast, University Road, Belfast, BT7 1NN
}

\date{\today}

\begin{abstract}
Motivated by information geometry, a distance function on the space of stochastic matrices is advocated. Starting with sequences of Markov chains the Bhattacharyya angle is advocated as the natural tool for comparing both short and long term Markov chain runs. Bounds on the convergence of the distance and mixing times are derived. Guided by the desire to compare different Markov chain models, especially in the setting of healthcare processes, a new distance function on the space of stochastic matrices is presented. It is a true distance measure which has a closed form and is efficient to implement for numerical evaluation. In the case of ergodic Markov chains, it is shown that considering either the Bhattacharyya angle on Markov sequences or the new stochastic matrix distance leads to the same distance between models.
\end{abstract}

\maketitle

\section{Introduction}

Markov chains are a well studied tool in probability theory with numerous applications in communications~\cite{Omer2022ReviewSystems, AbuAlsheikh2015MarkovSurvey}, simulation~\cite{Roy2020ConvergenceCarlo}, queuing~\cite{Hamdi2021ANetworks} and many other areas. Of particular interest to us is the use of Markov chains to describe operational processes~\cite{Mor2021AApplications, vanZelst2021EventTaxonomy}, and in particular those applicable to general healthcare services~\cite{Martin2022DaQAPO:Assessment,Munoz-Gama2022ProcessChallenges, DeRoock2022ProcessArt.}. Of importance to population healthcare management are mental health~\cite{Claudio2023AInterventions}, community care~\cite{McClean2006UsingSystem}, and cost-effectiveness of interventions~\cite{Craig2002EstimationChain} which drive many decisions made by governments and professionals. In healthcare common questions about operational processes could include: What are the typical sequence of events when someone enters a hospital in an emergency? Can we redesign an operational process to be more efficient? How can we compare operational processes to see which are similar or different? In particular, the UK's National Institute for Health Excellence (NICE) regularly publishes guidance and recommendations based on estimating transition probabilities which model healthcare processes as a Markov chain~\cite{Srivastava2021EstimationAppraisals}, and for example see~\cite{NICE2020} for an explicit example of a transition matrix defined using clinical test results with respect to~\href{https://www.nice.org.uk/guidance/ta624/evidence/committee-papers-pdf-7081680925}{Peginterferon beta-1a for treating relapsing–remitting multiple sclerosis}. It is this focus on comparing descriptions of health services, represented as Markov type models, which motivates us here~\cite{Gatta2017PMineR:Medicine,Marazza2019ComparingCare,Ghahfarokhi2021ProcessCubes,Vallati2023OnStudy}. 

Within the domain of discrete time Markov chains, much work has been done on quantifying the similarity between two chains.  The most widely used approach to comparing Markov chains is to consider the probabilities induced on the sequence space they generate,  i.e comparing fixed length sequences of two models to distinguish different behaviour. Early work in this area \cite{Lyngs1999MetricsModels} established that vectorised (hidden) Markov chain transitions could be compared using simple functions: the dot product, cosine angle and Frobenius distance. Later, the use of the total variation distance on the space of traces generated by models was introduced~\cite{Zeng2010AModels}, followed by the formulation of ``linear'' distances between  samples from Markov chains~\cite{Daca2016LinearChains}. 
A general approach to the $L^{p}$ norm--induced distances and the Kullback-Leibler divergence has also been studied~\cite{Jaeger2014ContinuityProcesses}, extending them to infinite time runs while exploring their properties of continuity.

Albeit useful, the $L^{p}$ induced distances considered do not have a natural probabilistic  interpretation, and similarities based on the Kullback-Leibler divergence might be infinite or very sensitive to small changes to a stochastic matrix. More formally Kullback-Leibler--type divergences are not true distances because they sacrifice properties such as symmetry (quasimetric), identity of indiscernibles (pseudometric) or the triangle inequality (semimetric). This motivates the identification of a distance function for comparing the trace spaces of Markov chains which: 1) is a true distance, 2) efficient to evaluate, 3) incorporates the probabilistic nature of traces. To address these issues, we propose the use of the well known distance function, the Bhattacharyya angle, motivated and derived from first principles via information geometry by associating a Markov chain with a categorical random variable~\cite{Amari2001InformationDistributions, Amari2021InformationGeometry}. 

Although a step in a good direction, distances based on Markov sequences can in some scenarios still be overly sensitive to initial distributions or small changes in the stochastic matrix which represents a Markov chain. For example a simple model of a biased coin toss, where outcome bias is parameterised by some value $p\in[0,1]$, can generate completely difference trace sequences if comparing two different initial distributions. This is obvious, given two different starting conditions we end up at different places. However the underlying generative process, as specified by the stochastic matrix, is the same. 
In keeping with healthcare, another example of where the initial state is of less importance is for hospital emergency department patient attendances or other specific disease pathways. As the process typically starts in the same initial state, e.g. ``presented at hospital'', every comparison of processes across different organisations will use the same initial state distribution for a Markov chain.
It therefore seems reasonable to separate the initial conditions from the Markov chain's structural properties. This amounts to equipping the space of stochastic matrices with a distance function. This suggests an approach to comparing Markov chains via the structure of the stochastic matrices themselves, rather than traces generated by short or long term runs. 

The Markov theory literature for direct comparisons of stochastic matrices seems less formalised than its sequence space counterpart. Early work on comparing the structural influence of stochastic matrices can be found in the field of channel capacities~\cite{Cohen1998ComparisonsSciences}. In more recent work, Markov mixture models were created in~\cite{Zhou2021WhoModel} which use the log-likelihood function and BIC criteria to cluster a person's sequence of daily activity via estimated stochastic matrices. The investigation of the internal block structure of a special class of Markov chains in~\cite{Sanders2020ClusteringChains} used $L^{p}$ norms and the Kullback-Leibler divergence. They also appealed to Markov chain mixing times to show estimated values converge in a short enough time to be accurate. In the context of analysing protein structure models, \cite{Kawabata2000ProteinEvolution} propose a similarity score between the $n$ step stochastic matrices of two different proteins. Mixing long term and short term properties of a Markov chain, \cite{He2016FusingRecommendation} propose modelling recommendations for a user's next choice given previous choices as an order $k$ Markov chain. 

Learning a stochastic matrix, given observation data, is also important in the fields of sensor-based human activity recognition~\cite{Wang2020W-Trans:Recognition} and designing statistically consistent algorithms for noisy data labelling~\cite{Yao2020DualLearning}. Underlying these estimation techniques is the choice of objective function one might want to optimise. Typical objective functions used are: the direct calculation of the Kullback-Leibler divergence~\cite{Zou2016ReinforcementVehicle, Du2019IntelligentLearning} which is the usual divergence calculated row-wise and summed for a pair of stochastic matrices, some form of log-likelihood function as proposed for denoising in image analysis~\cite{Austin2021StructuredState-Spaces}, the related cross entropy for video analysis~\cite{Jabri2020Space-timeWalk}, cross entropy hypothesis testing an estimated stochastic matrix against a reference \cite{Nevat2018AnomalyTraffic} or even Bayesian based test approaches~\cite{Bacallado2009BayesianConstraint,Strelioff2007InferringModeling}. In the majority of these settings short term runs are more important as recommendations for the next action are all that is needed, or the long time run of a Markov chain gives no more benefit after a finite number of steps, and the information contained within the stochastic matrix is all that is needed. Our observation is that a well motivated comparison between Markov chain stochastic matrices would be beneficial, and could make more sense in certain scenarios than comparing sequences directly. This leads us to identify a distance function between stochastic matrices which mirrors desired properties of distances on sequence spaces. 

In summary, our ideas are to further develop the use of information geometry in Markov chain theory as a principled way to explore a true distance measure in two scenarios:
\begin{enumerate}
    \item Sequence space: Comparing the probability distributions induced by two Markov chains in terms of their $n$ step sequences
    \item Stochastic matrix space: Comparing the structural, i.e. local state to state jump, difference between two Markov chains, independently of initial distributions
\end{enumerate}

To address these ideas, our contributions in this work are:
\begin{enumerate}
    \item Motivation from first principles the use of the Bhattacharyya angle as an appropriate distance measure for comparing Markov chain sequences
    \item Extending these principles to a novel distance measure which compares Markov chain stochastic matrices directly
    \item Show how comparing sequences or stochastic matrices gives the same distance when considering ergodic Markov chains
\end{enumerate}

Throughout, we denote a time homogeneous discrete time Markov chain with $M$ which is defined over a state space $X=\lbrace x_{1},\ldots,x_{k}\rbrace$. In other words we consider sequences of random variables $(Z_{1}, Z_{2},\ldots)$ which satisfy the standard Markov property. Initial Markov chain distributions are written $\boldsymbol{\pi}$, with components $\pi(x_{i})$ representing the probability of being in state $x_{j}$, and the associated model's (row) stochastic matrix $\boldsymbol{P}$, with components $P(x_{i},x_{j}) = \mathrm{Pr}(Z_{t+1}=x_{j}\vert Z_{t}=x_{i})$. Sequences of length $n$ form a space $X^{n}$ and the space of infinite length sequences is written $X^{\omega}$. Over all elements of either $X^{n}$, or alternatively $X^{\omega}$, we can identify an induced probability vector $\boldsymbol{p}$, with components denoted $p(w)$, assigned for each sequence $w=(x_{1},\ldots,x_{n})\in X^{n}$, according to initial distribution $\boldsymbol{\pi}$ and stochastic matrix $\boldsymbol{P}$. The Iverson bracket is written as $[A]$ for some boolean statement $A$, and the collection of eigenvalues of a matrix $\boldsymbol{R}$ is denoted $\sigma(\boldsymbol{R})$. Finally we make use of the Hadamard product (i.e. element wise product) between two elements, denoted as $\circ$, and the Hadamard square root (i.e. element wise square root) denoted as $\circ\frac{1}{2}$. 

\section{Sequence distances}

Information geometry is the study of parametric families of probability distributions using the language of differential geometry~\cite{Amari2021InformationGeometry, Wolfer2023InformationSurvey}. An extremely well developed and diverse field, we will only present the topics we need and briefly. 

We treat the parameter space of a given statistical model as a manifold. The fundamental object in both information and differential geometry is the metric tensor. Manifolds allow us to discuss concepts such as continuity and differentiation. A metric tensor allows us to define a notion of distance between points on a manifold, which means we can equip the parameter space of a statistical model with a distance function. 

There are many ways to define a metric tensor for a manifold and historically in information geometry this has been via the Fisher information metric tensor. Consider a parametric family of stochastic models whose distribution function is denoted $h(x;\boldsymbol{p})$. Here $x$ is an element of the distribution's support space $X$ and $\boldsymbol{p}\in\mathbb{R}^{n}$ is a vector of the parameters for the family, which we will show can be treated as a probability itself. Up to a constant scalar, the Fisher information metric at a point $\boldsymbol{p}$ is defined as an $n \times n$ matrix with components, 
\begin{equation}\label{eqn:fisher_information_tensor_components}
    g_{ij}(\boldsymbol{p}) = -\mathbb{E}_{h}\left[\frac{\partial^{2}\log{h(x;\boldsymbol{p})}}{\partial p_{i} \partial p_{j}}\right]
\end{equation}
where $\mathbb{E}_{h}$ indicates the expectation value is taken with respect to the probability distribution $h(x;\boldsymbol{p})$ over the support of $x$. The $g_{ij}$ are known as metric tensor components, and define the geometry of the parameter space, which is a statistical manifold. We want to discuss the concept of geodesic curves, i.e. curves connecting two points of the parameter space which respect the underlying geometry of the space. Geodesics are derived using the geodesic equation which in turn is derived from the metric tensor. Intuitively, the solutions to the geodesic equations are the ``shortest'' path curves with respect to the geometry of the space under consideration. Conventionally, geodesic curves are parameterised by some real variable $s$, which encodes the start point and end point of the curve. Ultimately we want to find a closed form for the induced distance function, minimised over all possible geodesic curves $\boldsymbol{c}(s)$. In the case of a parametric family of probability distributions these points are denoted $\boldsymbol{c}(s_{1})=\boldsymbol{p}_{1}$ and $\boldsymbol{c}(s_{2})=\boldsymbol{p}_{2}$. In general, there can be multiple geodesic curves connecting two points. For example the two ways round a great circle on a sphere. Thus we define the distance between two points using the geodesic curve which has the smallest length,
\begin{equation}\label{eqn:minimum_length_distance}
    d(\boldsymbol{p}_{1},\boldsymbol{p}_{2}) = \min_{\boldsymbol{c}(s)}\int_{s_{1}}^{s_{2}}\mathrm{d}s\sqrt{\sum_{i,j}g_{ij}(\boldsymbol{p})\frac{\mathrm{d}c_{i}(s)}{\mathrm{d}s}\frac{\mathrm{d}c_{j}(s)}{\mathrm{d}s}}
\end{equation}
$d(\boldsymbol{p}_{1},\boldsymbol{p}_{2})$ would then be our desired distance function on a statistical manifold. It is well known that Markov chains induce a probability distribution over sequences of fixed length~\cite{Kiefer2018OnModels}. These distributions correspond to categorical distributions and are defined as having the probability mass function,
\begin{equation}
    h(x;\boldsymbol{p})=\prod_{i}p(x_{i})^{[x=x_{i}]}
\end{equation} 
with domain of input events $X=\lbrace x_{1},\ldots,x_{n}\rbrace$ and associated probabilities $\boldsymbol{p}=(p(x_{1}),\ldots,p(x_{n}))$. In other words the probability of observing each possible event $x_{i}$ is $p(x_{i})$, akin to the probability distribution of rolling an $n$-sided dice. For a fixed state space of $n$ elements, the space of all probabilities which can parametrise a categorical distribution is known as the simplex space $\Delta=\lbrace\boldsymbol{p}:\Vert\boldsymbol{p}\Vert_{1}=1\rbrace$. Thus from here we can compute the metric tensor components in Eq.~\ref{eqn:fisher_information_tensor_components} and work through until we arrive at a solution for Eq.~\ref{eqn:minimum_length_distance}. 

The metric tensor components we require are, up to an overall positive scaling constant,
\begin{equation}\label{eqn:fisher_metric_simplex}
    g_{ij}(\boldsymbol{p}) = \frac{\delta_{ij}}{p_{i}}
\end{equation}
where $\delta_{ij}$ is a Kronecker delta. In differential geometry a metric tensor can be used to define geodesic equations, i.e. equations which define in some sense the ``straightest'' path. Typically solving these equations is difficult, however in this case we can sidestep the usual challenge. The coordinate transformation $\boldsymbol{y}=\boldsymbol{p}^{\circ\frac{1}{2}}$ allows us to show the metric tensor components become $g_{ij}(\boldsymbol{y}) = \delta_{ij}$. This transformation shows the components are the standard Euclidean metric in Euclidean space, with the condition that points are constrained to a positive quadrant of unit hypersphere centered at the origin ($\boldsymbol{y}\cdot\boldsymbol{y}=1)$. Hence, the associated geodesic curves we want are arcs following the great circles defined by points $\boldsymbol{y}_{1}$ and $\boldsymbol{y}_{2}$. As a result, a closed form for the distance between two categorical distributions with $n$ states and parameterised with $\boldsymbol{p}_{1},\boldsymbol{p}_{2}\in \Delta$ is,
\begin{equation}
	d^{(n)}(\boldsymbol{p}_{1},\boldsymbol{p}_{2}) = 2\arccos\mathrm{BC}(\boldsymbol{p}_{1},\boldsymbol{p}_{2})
\end{equation}
This distance is known under various names such as the Rao distance~\cite{Nielsen2017ClusteringGeometry}, categorical Fisher-Rao distance~\cite{Miyamoto2024OnDistance}, but we will refer to it as the Bhattacharyya angle~\cite{Bhattacharyya1946OnDistributions}. Here we have made clear the link of this distance to the Bhattacharyya coefficient,
\begin{equation}
    \mathrm{BC}(\boldsymbol{p}_{1},\boldsymbol{p}_{2}) = \sum_{i=1}^{n}\sqrt{p_{1}(x_{i})p_{2}(x_{i})}
\end{equation}
The Bhattacharyya coefficient has been studied in relation to comparing discrete probability distributions already~\cite{Aherne1998TheData, Bi2019TheUpdating}. However the Bhattacharyya angle's use as a true distance measure does not yet seem to have been pursued for the purpose of comparing Markov chains.

The final link back to Markov chains comes from realising that the collection of all probabilities of observing an $n$ step Markov chain trace forms a categorical distribution over $X^{n}$. Explicitly, denoting a general sequence of events as $w = (x_{1}, x_{2}, \ldots, x_{n})$ for a chain $M$ and $n\ge2$, the probability of observing the trace $w$ given an initial distribution $\boldsymbol{\pi}$ for a Markov chain with stochastic matrix $\boldsymbol{P}$ is,
\begin{equation}
    p(w) = \pi(x_{1})P(x_{1},x_{2})\cdots P(x_{n-1},x_{n})
\end{equation}
Making the identification $M_{1}\rightarrow\boldsymbol{p}_{1}$, $M_{2}\rightarrow\boldsymbol{p}_{2}$ we arrive at a distance for Markov chains,
\begin{equation}
    d^{(n)}(M_{1},M_{2}) = 2\arccos\sum_{w\in X^{n}}\sqrt{p_{1}(w)p_{2}(w)}
\end{equation}
The key concepts here are: 1) using information geometry, a notion of distance can be naturally motivated for Markov chains, 2) the distance satisfies all properties of a true distance function and 3) the distance is constructed to respect the basic probability properties of a Markov chain. Following~\cite{Kazakos1978TheChains}, for convenience we introduce the following definitions, where $\circ$ denote Hadamard operations, 
\begin{subequations}
    \begin{align}
        \boldsymbol{r} &= \boldsymbol{\pi}_{1}^{\circ \frac{1}{2}}\circ\boldsymbol{\pi}_{2}^{\circ \frac{1}{2}} \\
        \boldsymbol{R} &= \boldsymbol{P}_{1}^{\circ\frac{1}{2}}\circ\boldsymbol{P}_{2}^{\circ\frac{1}{2}}
    \end{align}
\end{subequations}
Considering the product structure for each $p_{i}(w)$ and denoting the vector of all ones as $\textbf{1}$, we can compactly write the sum over length $n$ words,
\begin{equation}\label{eqn:compact_ba}
    d^{(n)}(M_{1},M_{2}) = 2\arccos\left(\boldsymbol{r}^{T}\boldsymbol{R}^{n}\textbf{1}\right)
\end{equation}
The Bhattacharyya angle in Eq.~\ref{eqn:compact_ba} serves as a true distance measure for comparing Markov chain induced probabilities. It is numerically very appealing as calculations can be reduced to matrix and vector multiplication. 

Turning to long run sequences, we can compute asymptotic quantities akin to the Kullback-Leibler divergence rate. Such quantities are important in the analysis of systems which have become stationary, i.e. do not change anymore, and are linked to the information content of a probability distribution. Typically they compare infinite step traces for two Markov chains, averaged by the number of steps. However we immediately see for any pair of models $d^{(n)}(M_{1},M_{2})/n \rightarrow 0$ as $n\rightarrow\infty$. This is similar to most ``distances'' considered in Markov theory, with the notable exception of the Kullback-Leibler divergence rate~\cite{Rached2004TheSources}. Hence we are motivated to study the non-regularised distance limit. In the case of the Bhattacharyya angle the quantity we are interested in is,
\begin{equation}
    \begin{aligned}
        d^{\omega}(M_{1},M_{2}) &= \lim_{n\rightarrow\infty}d^{(n)}(M_{1},M_{2}) \\
        & = 2\arccos\sum_{w\in X^{\omega}}\sqrt{p_{1}(w)p_{2}(w)}
    \end{aligned}
\end{equation}
Note this is equivalent to the matrix product form of $d^{\omega}(M_{1},M_{2}) = \lim_{n\rightarrow\infty}2\arccos\left(\boldsymbol{r}^{T}\boldsymbol{R}^{n}\textbf{1}\right)$. We can see the Bhattacharyya angle always exists in the limit $n\rightarrow\infty$ by observing (via the inequality of arithmetic and geometric means) $\sqrt{p_{1}(w)p_{2}(w)}\le \frac{p_{1}(w)+p_{2}(w)}{2}$ and using the direct comparison test for series convergence~\cite{Munem1984CalculusMunem}. As $\sum_{w\in X^{\omega}}\frac{p_{1}(w)+p_{2}(w)}{2}=1$ (i.e. converges) the direct comparison test proves $d^{\omega}(M_{1},M_{2})$ converges. Convergence of the Bhattacharyya angle is then ensured by the continuity of $\arccos$ on $[0,1]$. 

A simple characterisation of when $d^{\omega}(M_{1},M_{2})$ is guaranteed to attain its maximum can be found from the inequality $\rho(\boldsymbol{P}_{1}^{\circ\frac{1}{2}}\circ\boldsymbol{P}_{2}^{\circ\frac{1}{2}})\le\rho(\boldsymbol{P}_{1})\rho(\boldsymbol{P}_{2})=1$ (see Huang et. al. for details~\cite{Huang2011OnMatrices}). In other words when the spectral radius of $\boldsymbol{P}_{1}^{\circ\frac{1}{2}}\circ\boldsymbol{P}_{2}^{\circ\frac{1}{2}}$ is strictly less than unity the distance converges to the numerical constant $\pi$ in the limit $n\rightarrow\infty$. This means the long term run of two such Markov chains are statistically distinguishable according to the Bhattacharyya angle. We can however go further and study two types of matrices for which a closed form for the ``Bhattacharyya rate'' $d^{\omega}(M_{1},M_{2})$ is immediately computable and not trivially a maximum~\cite{Kazakos1978TheChains},
\begin{enumerate}
    \item Type 1: Diagonalisable matrices (over $\mathbb{C}$) with real eigenvalues
    \item Type 2: Primitive matrices 
\end{enumerate}
Primitive matrices are defined as non-negative matrices (i.e. all elements are non-negative) such that for some positive integer $k>0$ the matrix power $\boldsymbol{R}^{k}$ is a strictly positive matrix (i.e. all elements are positive). Primitive matrices play a central role in various proofs based on the Perron-Frobenius theorem~\cite{Dembele2021AMatrices}. Although with substantial overlap, these matrix types do have non-trivial complements in the sense that there are matrices which exist as one type and not the other. 


\begin{theorem}[Bhattacharyya rate]\label{thm1}
The Bhattacharyya rate between two Markov chain models which satisfy type 1 or type 2 requirements is
\begin{equation}
    d^{\omega}(M_{1}, M_{2}) = 
        \begin{cases} 2\arccos\sum\limits_{i\in\mathcal{I}}\boldsymbol{r}^{T}\boldsymbol{p}_{i}\boldsymbol{q}_{i}^{T}\boldsymbol{1} & \mathrm{:type~1} \\
        2\arccos\left(\boldsymbol{r}^{T}\boldsymbol{p}_{1}\boldsymbol{q}_{1}^{T}\boldsymbol{1}\right) & \mathrm{:type~2} \\
        \pi & \mathrm{:~}\rho(\boldsymbol{R})<1
        \end{cases}
\end{equation} 
\end{theorem}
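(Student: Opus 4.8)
The plan is to strip off the outer $2\arccos(\cdot)$ --- legitimate because $\arccos$ is continuous on $[0,1]$ and, as already noted in the excerpt, $\boldsymbol{r}^T\boldsymbol{R}^n\boldsymbol{1}\in[0,1]$ for every $n$ --- and reduce the whole statement to evaluating $\lim_{n\to\infty}\boldsymbol{r}^T\boldsymbol{R}^n\boldsymbol{1}$. Two structural facts drive the analysis: $\boldsymbol{R}=\boldsymbol{P}_1^{\circ\frac12}\circ\boldsymbol{P}_2^{\circ\frac12}$ is entrywise nonnegative, and $\rho(\boldsymbol{R})\le 1$ (by the Huang et al. bound quoted above, or more directly because Cauchy--Schwarz gives row sums $\sum_j\sqrt{P_1(i,j)P_2(i,j)}\le 1$, so the $\infty$-norm of $\boldsymbol{R}$ is at most $1$). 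Hence $\sigma(\boldsymbol{R})$ lies in the closed unit disc, and the three branches of the theorem are the three ways that constraint can be realised. The case $\rho(\boldsymbol{R})<1$ is then immediate: a matrix with spectral radius below $1$ satisfies $\boldsymbol{R}^n\to\boldsymbol{0}$, so $\boldsymbol{r}^T\boldsymbol{R}^n\boldsymbol{1}\to 0$ and $d^\omega(M_1,M_2)=2\arccos 0=\pi$.

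For the type 2 case I would assume $\rho(\boldsymbol{R})=1$ (otherwise the previous paragraph applies) and invoke Perron--Frobenius for primitive matrices: the Perron root equals $\rho(\boldsymbol{R})=1$, it is a simple eigenvalue, it strictly dominates the modulus of every other eigenvalue, and it carries strictly positive right and left eigenvectors $\boldsymbol{p}_1,\boldsymbol{q}_1$, normalised so that $\boldsymbol{q}_1^T\boldsymbol{p}_1=1$. The standard spectral-projector (power-iteration) argument then gives $\boldsymbol{R}^n\to\boldsymbol{p}_1\boldsymbol{q}_1^T$, whence $\boldsymbol{r}^T\boldsymbol{R}^n\boldsymbol{1}\to\boldsymbol{r}^T\boldsymbol{p}_1\boldsymbol{q}_1^T\boldsymbol{1}$ and the middle formula follows.

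For the type 1 case I would use diagonalisability directly: writing $\boldsymbol{R}=\boldsymbol{V}\boldsymbol{\Lambda}\boldsymbol{V}^{-1}$ yields the decomposition $\boldsymbol{R}=\sum_i\lambda_i\boldsymbol{p}_i\boldsymbol{q}_i^T$ with biorthonormal right/left eigenvectors ($\boldsymbol{q}_i^T\boldsymbol{p}_j=\delta_{ij}$, from $\boldsymbol{V}^{-1}\boldsymbol{V}=I$), so $\boldsymbol{R}^n=\sum_i\lambda_i^n\boldsymbol{p}_i\boldsymbol{q}_i^T$ with every $\lambda_i\in[-1,1]$. Terms with $|\lambda_i|<1$ vanish as $n\to\infty$, terms with $\lambda_i=1$ are fixed, and one is left with $\lim_n\boldsymbol{R}^n=\sum_{i\in\mathcal{I}}\boldsymbol{p}_i\boldsymbol{q}_i^T$ for $\mathcal{I}=\{i:\lambda_i=1\}$; pairing with $\boldsymbol{r}^T$ on the left and $\boldsymbol{1}$ on the right and reapplying $\arccos$ gives the first formula. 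As a consistency check, if the only unit-modulus eigenvalue is a simple Perron root equal to $1$ this collapses to the type 2 expression.

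The hard part will be the eigenvalue $\lambda=-1$ in the type 1 case: when it is present, $\lambda_i^n=(-1)^n$ oscillates, $\boldsymbol{R}^n$ has no plain limit, and $d^\omega$ as defined need not exist --- so I would need either to argue that the type 1 hypothesis together with the sub-stochastic structure of $\boldsymbol{R}$ rules out $-1\in\sigma(\boldsymbol{R})$, or to state the type 1 branch under that extra assumption, or to replace the limit by its Cesàro average (which does exist and still equals $\sum_{i\in\mathcal{I}}\boldsymbol{p}_i\boldsymbol{q}_i^T$). The related bookkeeping point I would make explicit is precisely when $\rho(\boldsymbol{R})=1$ as opposed to $\rho(\boldsymbol{R})<1$: this is controlled by the Cauchy--Schwarz equality condition on the rows of $\boldsymbol{P}_1$ and $\boldsymbol{P}_2$, and it decides whether a given type 1 or type 2 chain sits in its named branch or drops into the $\pi$ branch, so that the three listed cases are genuinely exhaustive on their stated domains.
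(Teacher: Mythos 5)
Your argument is essentially the paper's own proof: strip the $2\arccos(\cdot)$, use the spectral decomposition $\boldsymbol{R}^{n}=\sum_{i}\lambda_{i}^{n}\boldsymbol{p}_{i}\boldsymbol{q}_{i}^{T}$ for the diagonalisable case, the Perron--Frobenius limit $\boldsymbol{R}^{n}\rightarrow\boldsymbol{p}_{1}\boldsymbol{q}_{1}^{T}$ for the primitive case, and $\boldsymbol{R}^{n}\rightarrow\boldsymbol{0}$ when $\rho(\boldsymbol{R})<1$, then substitute into $\boldsymbol{r}^{T}\boldsymbol{R}^{n}\boldsymbol{1}$. The $\lambda=-1$ obstruction you flag is resolved in the paper exactly as one of your suggested options: the type 1 requirements are defined (in the proof) to include $\lambda_{i}>-1$, so the oscillatory case is excluded by hypothesis.
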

\begin{proof}
Define the requirements of the two types of Markov chains as
\begin{enumerate}
    \item Type 1: $\boldsymbol{R}$ is diagonalisable, $\rho(\boldsymbol{R})=1$, $\lambda_{i}>-1$, and $\mathcal{I}=\lbrace i:\lambda_{i}=1,\lambda_{i}\in\sigma(\boldsymbol{R})\rbrace$
    \item Type 2: $\boldsymbol{R}$ is primitive, $\rho(\boldsymbol{R})=\lambda_{1}=1$
\end{enumerate}
Denote the eigenvalues, right eigenvectors and left eigenvectors of $\boldsymbol{R}$ as $\lambda_{i}$, $\boldsymbol{p}_{i}$ and $\boldsymbol{q}_{i}$ respectively. The definition of diagonalisation allows us to write type 1 matrices as $\boldsymbol{R}=\sum_{i}\lambda_{i}\boldsymbol{p}_{i}\boldsymbol{q}_{i}^{T}$. Thus we can write an arbitrary matrix power as $\boldsymbol{R}^{n}=\sum_{i}\lambda_{i}^{n}\boldsymbol{p}_{i}\boldsymbol{q}_{i}^{T}$. The eigenvalues with absolute value strictly less than one vanish in the limit $n\rightarrow\infty$. Eigenvalues equal to unity are the only ones which remain. In a similar way, the Perron-Frobenius theorem states any type 2 matrix (i.e. primitive matrices) has the limit $\boldsymbol{R}^{n}\rightarrow\boldsymbol{p}_{1}\boldsymbol{q}_{1}^{T}$ as $n\rightarrow\infty$. Therefore we can collect the results as
\begin{equation}
    \lim_{n\rightarrow\infty}\boldsymbol{R}^{n} = 
        \begin{cases} 
        \sum_{i\in\mathcal{I}}\boldsymbol{p}_{i}\boldsymbol{q}_{i}^{T} & \text{: type 1} \\
        \boldsymbol{p}_{1}\boldsymbol{q}_{1}^{T} & \text{: type 2}
        \end{cases}
\end{equation}
Inserting the above into Eq.~\ref{eqn:compact_ba} we obtain a closed form for $d^{\omega}(M_{1}, M_{2})$ and arrive at our result. 
\end{proof}
The closed form nature of the Bhattacharyya angle is such that numerical analysis can be efficiently implemented, which suggests we look at a position between short term runs and comparing Markov chains asymptotically through the field of mixing times~\cite{Montenegro2006MathematicalChains,Dyer2006MarkovComparison}. 

Markov chain mixing times are concerned with estimating a finite time step $\tau_{\star}$ such that the distance between some reference stationary distribution and the $\tau_{\star}$ step distribution of some Markov chain is ``close''. Denoting an initial Markov chain distribution for a given state $x_{j}\in X$ as $\boldsymbol{e}_{j}$ (i.e. the standard basis in $\mathbb{R}^{n}$), we define the Bhattacharyya angle induced mixing time of a Markov chain $\boldsymbol{P}$ (with dimensions $n\times n$) starting in state $\boldsymbol{e}_{j}$, with unique stationary distribution $\boldsymbol{\pi}_{\star}$, as
\begin{equation}
    \tau_{\star}(x)=\min\left\lbrace \tau>0 : d^{(n)}\left(\boldsymbol{\pi}_{\star}, \boldsymbol{e}_{j}^{T}\boldsymbol{P}^{\tau}\right) \leq \epsilon \right\rbrace
\end{equation}
We can operationally call two Markov chains ``similar'' if after some time $\tau_{\star}$ their state distributions are ``close enough'' for the problem at hand. Another way to view this is that if we have a obtained a mixing time for one chain, how can we use it to bound the mixing time of another chain? In some sense one can then approximate one Markov chain with another after a finite number of steps, when comparing the marginal state distributions of two Markov chains. 

A common class of Markov chains considered in mixing times are ergodic (irreducible and aperiodic) and reversible. Ergodic means that a discrete finite Markov chain is,
\begin{enumerate}
    \item Irreducible: $\forall i,j\,\exists\,\tau \textrm{ such that } P^{(n)}_{ij}>0$, i.e. all states are reachable from another in a finite number of steps $\tau$
    \item Aperiodic: The greatest common divisor of the lengths of all cycles of the Markov chain's stochastic matrix is one
\end{enumerate}
and a well known result is that ergodic chains have a unique stationary distribution $\boldsymbol{\pi}$. For an ergodic chain, reversible means the associated unique stationary distribution and stochastic matrix satisfy,
\begin{equation}
    \mathrm{diag}(\boldsymbol{\pi})\,\boldsymbol{P}=\boldsymbol{P}^{T}\,\mathrm{diag}(\boldsymbol{\pi})
\end{equation}
which are known as the detailed balance equations. Under these assumptions, bounds on $\tau_{\star}$ can be placed as we can take advantage of the fact that the $\tau$-step transition matrix components can be written as~\cite{Dyer2006MarkovComparison},
\begin{equation}\label{eqn:ergodic_reversible_markov_powers}
P^{\tau}(x_{j},x_{k}) = \pi_{\star}(x_{k}) +\sqrt{ \frac{\pi_{\star}(x_{k})}{\pi_{\star}(x_{j})}}\sum_{i=2}^{n}\lambda_{i}^{\tau}e^{(i)}_{j}e^{(i)}_{k}
\end{equation}
Here $\lambda_{i}$ are the eigenvalues of $\boldsymbol{P}$, the $\boldsymbol{e}^{(i)}$ are an orthonormal basis $\boldsymbol{e}^{(i)}\cdot\boldsymbol{e}^{(j)}=\delta_{ij}$, with $\boldsymbol{e}^{(1)}=\boldsymbol{\pi}_{\star}^{\circ\frac{1}{2}}$. Finally the assumption $\boldsymbol{\pi}_{\star}>\boldsymbol{0}$ for the unique stationary distribution of $\boldsymbol{P}$ is imposed. Usefully, the eigenvalues of ergodic and reversible Markov chains are real and satisfy $\lambda_{1}=1>\lambda_{2}\ge\ldots\ge\lambda_{n}>-1$. This allows us to take the limit $\tau\rightarrow\infty$ and know with certainty the power of all eigenvalues, except $\lambda_{1}$, will eventually vanish. With the expression in Eq.~\ref{eqn:ergodic_reversible_markov_powers}, and using the Bhattacharyya angle's Taylor series, with associated bounds on remainder terms, we can give two conditions needed to be sufficient to bound the mixing time,
\begin{subequations}
    \begin{align}
        \tau_{-} &\ge\frac{1}{\log\lambda_{\max}}\log\gamma\left(1+\frac{1}{\pi_{-}}\right)^{-1} \\
        \tau_{+} &\ge \frac{1}{2\log\lambda_{\mathrm{max}}}\log 4(1-\cos\epsilon/2)\left(1+\frac{1}{\pi_{-}}\right)^{-1}
    \end{align}
\end{subequations}
where we have introduced the so-called spectral gap $\lambda_{\max} = \max(\lambda_{2},\vert\lambda_{n}\vert)$, with $\pi_{-} = \min\boldsymbol{\pi}_{\star}$, and the constant $\gamma=1-(5/16)^{2/7}\approx0.282$. The constant $\gamma$ is chosen to ensure the remainder of the Taylor series expansion of the mixing time can be replaced safely with a known lower bound, which entails a minimum requirement on $\tau_{\star}$ given by $\tau_{-}$. The second inequality using $\tau_{+}$ gives a sufficient condition which guarantees the desired distance accuracy $\epsilon$ has been achieved. Consequently, the true mixing time is known to fall within the range $[\tau_{-}, \tau_{+}]$. Given the number of uses, and easily obtained results afforded by the Bhattacharyya angle we promote it as a robust tool to investigate both short and long time Markov chain runs. 

\section{Stochastic matrix distances}

Our idea is to, again appealing to information geometry, elevate the Bhattacharyya angle from a probability space to matrices. To this end, note that any stochastic matrix is a collection of individual state-conditional next-state probability distributions~\cite{Lebanon2005AxiomaticModels, Montufar2014OnPolytopes}. This can be interpreted as a stochastic matrix being an element of a Cartesian product space of underlying categorical distributions (as explained in the trace distance section). Thus we identify $\boldsymbol{P}=(\boldsymbol{p}_{1}, \ldots, \boldsymbol{p}_{n})\in\Delta\times\cdots\times\Delta$, where $\Delta$ is the $n$ dimensional simplex space, i.e. $\boldsymbol{P}$ is an $n\times n$ matrix. Such a decomposition induces what is known as a product metric in differential geometry~\cite{Miyamoto2024OnDistance}. In other words, we can build a distance function on the space of stochastic matrices by using the Bhattacharyya angle inherited from the underlying simplex space $\Delta$. Using a product metric structure here is actually very natural. The work of~\cite{Lebanon2005AxiomaticModels, Montufar2014OnPolytopes} characterise the metric tensor components for stochastic matrices in terms of invariance under linear transformations called Markov maps. The result is choosing a product metric structure or asking for invariance under Markov maps gives the same distance on stochastic matrices.

As a demonstration using two manifolds, consider the product metric tensor induced via the Cartesian product $(\Delta,g)\times (\Delta,g)$ where $g$ is the metric tensor defined via Eq.~\ref{eqn:fisher_metric_simplex}. The product metric tensor components in this case are defined as,
\begin{equation}
    g^{\mathrm{prod}}_{ij}\left(\boldsymbol{p}_{1},\boldsymbol{p}_{2}\right) = g_{ij}\left(\boldsymbol{p}_{1}\right) \oplus g_{ij}\left(\boldsymbol{p}_{2}\right)
\end{equation}
Further, we can construct the product curve which connects pairs of points on $\Delta\times\Delta$ as $\boldsymbol{c}(t) = \left(\boldsymbol{c}_{1}(t),\boldsymbol{c}_{2}(t)\right)$ where $\boldsymbol{c}_{1}(t)$ and $\boldsymbol{c}_{2}(t)$ are the geodesic curves on the two manifolds respectively. A well known result in differential geometry is that product metrics induce an associated distance measure on the product manifold,
\begin{equation}
    d_{\mathrm{prod}} \left(\boldsymbol{p}_{1}\oplus\boldsymbol{q}_{1},\boldsymbol{p}_{2}\oplus\boldsymbol{q}_{2}\right) =\sqrt{d^{2}_{1}\left(\boldsymbol{p}_{1},\boldsymbol{p}_{2}\right)+d^{2}_{2}\left(\boldsymbol{q}_{1},\boldsymbol{q}_{2}\right)}
\end{equation}
More generally, for an $n$ dimensional simplex we have the general form for a distance measure as $d_{\mathrm{prod}}=\sqrt{\sum_{i=1}^{n}d^{2}_{i}}$ and, consequently, we can define a distance function on the space of stochastic matrices.
\begin{theorem}[Stochastic Matrix Distance]\label{thm2}
Let $\boldsymbol{P}_{1}$, $\boldsymbol{P}_{2}$ be square (row) stochastic matrices. The minimum length geodesic distance between $\boldsymbol{P}_{1}$, $\boldsymbol{P}_{2}$ is
\begin{equation}\label{eqn:smd}
        d\left(\boldsymbol{P}_{1}, \boldsymbol{P}_{2}\right) = 
         2\sqrt{\sum_{x_{i}\in X}\arccos^{2}\sum_{x_{j}\in X}\sqrt{P_{1}(x_{i},x_{j})P_{2}(x_{i},x_{j})}}
\end{equation}
\end{theorem}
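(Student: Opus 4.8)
The plan is to read off the result from the product-manifold structure set up just above the statement. A square row-stochastic matrix $\boldsymbol{P}$ is exactly the tuple of its rows $\boldsymbol{P}=(\boldsymbol{p}_{1},\dots,\boldsymbol{p}_{n})$ with each row $\boldsymbol{p}_{i}\in\Delta$, so the space of $n\times n$ stochastic matrices is the $n$-fold Riemannian product $(\Delta,g)\times\cdots\times(\Delta,g)$, where $g$ is the Fisher metric of Eq.~\ref{eqn:fisher_metric_simplex}, carrying the product metric $g^{\mathrm{prod}}=\bigoplus_{i}g$. I would then (i) justify that the geodesic distance of such a product equals the $\ell^{2}$-aggregation $\sqrt{\sum_{i}d_{i}^{2}}$ of the per-factor geodesic distances $d_{i}$, and (ii) substitute into each $d_{i}$ the closed form of the simplex geodesic distance, i.e. the Bhattacharyya angle $d^{(n)}(\cdot,\cdot)=2\arccos\mathrm{BC}(\cdot,\cdot)$ derived earlier in the sequence-distance section.

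For step (i), take any piecewise-$C^{1}$ curve $\boldsymbol{c}(t)=(\boldsymbol{c}_{1}(t),\dots,\boldsymbol{c}_{n}(t))$, $t\in[t_{1},t_{2}]$, joining $\boldsymbol{P}_{1}$ to $\boldsymbol{P}_{2}$. Since $g^{\mathrm{prod}}$ is block diagonal, the squared speed is $\sum_{i}g(\dot{\boldsymbol{c}}_{i},\dot{\boldsymbol{c}}_{i})$, so the length of $\boldsymbol{c}$ equals $\int_{t_{1}}^{t_{2}}\Vert\boldsymbol{v}(t)\Vert_{2}\,\mathrm{d}t$ with $\boldsymbol{v}(t)=(\Vert\dot{\boldsymbol{c}}_{1}(t)\Vert_{g},\dots,\Vert\dot{\boldsymbol{c}}_{n}(t)\Vert_{g})\in\mathbb{R}_{\ge0}^{n}$. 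Minkowski's integral inequality (equivalently, the triangle inequality for vector-valued integrals) gives $\int\Vert\boldsymbol{v}(t)\Vert_{2}\,\mathrm{d}t\ge\big\Vert\int\boldsymbol{v}(t)\,\mathrm{d}t\big\Vert_{2}=\sqrt{\sum_{i}\mathrm{length}(\boldsymbol{c}_{i})^{2}}\ge\sqrt{\sum_{i}d_{i}^{2}}$, the last step because each component curve is at least as long as the geodesic distance between its endpoints. This lower bound is attained by taking each $\boldsymbol{c}_{i}$ to be a constant-speed minimizing simplex geodesic on $[t_{1},t_{2}]$: then $\Vert\dot{\boldsymbol{c}}_{i}(t)\Vert_{g}=d_{i}/(t_{2}-t_{1})$, so $\boldsymbol{v}(t)$ is constant (a fortiori its components are in constant ratio), which is precisely the equality case of Minkowski's inequality. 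Hence $d(\boldsymbol{P}_{1},\boldsymbol{P}_{2})=\sqrt{\sum_{i}d_{i}^{2}}$, recovering the product-distance formula $d_{\mathrm{prod}}=\sqrt{\sum_{i}d_{i}^{2}}$ quoted before the theorem, now with $n$ factors.

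For step (ii), the simplex computation earlier identifies $d_{i}=d^{(n)}\!\big(\boldsymbol{p}_{i}^{(1)},\boldsymbol{p}_{i}^{(2)}\big)=2\arccos\sum_{x_{j}\in X}\sqrt{P_{1}(x_{i},x_{j})P_{2}(x_{i},x_{j})}$, where $\boldsymbol{p}_{i}^{(k)}$ is the $i$-th row of $\boldsymbol{P}_{k}$; then $d_{i}^{2}=4\arccos^{2}(\,\cdot\,)$, and pulling the factor $2$ out of the outer square root gives exactly Eq.~\ref{eqn:smd}. The step I expect to be the main obstacle is (i): cleanly justifying both the lower bound and its attainment for arbitrary curves, i.e. the fact that a geodesic of a Riemannian product is a product of geodesics traversed with speeds in fixed proportion — the Minkowski-inequality phrasing is what makes that bookkeeping transparent. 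A minor technical point is that if some $P_{1}(x_{i},x_{j})$ or $P_{2}(x_{i},x_{j})$ vanishes the corresponding row lies on the boundary of the positive orthant of the hypersphere $\boldsymbol{y}\cdot\boldsymbol{y}=1$; since $\arccos$ is continuous on $[0,1]$ the arc-length formula $2\arccos\mathrm{BC}$ persists there, so the closed form holds on all of $\Delta$ and no strict-positivity hypothesis is required.
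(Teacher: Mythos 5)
Your proposal is correct and follows essentially the same route as the paper's appendix proof: decompose each stochastic matrix into its rows as an element of $\Delta\times\cdots\times\Delta$, use the product-metric distance $\sqrt{\sum_{i}d_{i}^{2}}$, and substitute the Bhattacharyya angle for each row pair. The only difference is that you explicitly prove (via Minkowski's integral inequality and the equality case) the product-distance formula that the paper simply quotes as a well-known fact of Riemannian products, which is a welcome but not divergent addition.
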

The proof of theorem~\ref{thm2} can be found in appendix~\ref{proof:thm2}. As well as enjoying many of the previously defined properties of the Bhattacharyya angle, it is also valid for any pair of stochastic matrices. There are no other requirements to compute the distance on the space of stochastic matrices. This appears to be a genuinely new distance function derived via information geometry. Although motivated by its use for short time Markov chains, we can again consider infinite step distances for ergodic Markov chains and can actually prove the distance reduces to the Bhattacharyya angle between the stationary distributions of the two chains,
\begin{theorem}[Ergodic distance]\label{thm3}
    Let $M_{1}$, $M_{2}$ be two ergodic discrete time Markov chains with unique stationary distributions $\boldsymbol{\pi}_{1}$, $\boldsymbol{\pi}_{2}$ respectively. The ergodic distance between $M_{1}$ and $M_{2}$ is 
    \begin{equation}\label{eqn:ergodic_asymp_distance}
        d_{E}(M_{1},M_{2}) = 2\arccos\mathrm{BC}(\boldsymbol{\pi}_{1},\boldsymbol{\pi}_{2})
    \end{equation}
\end{theorem}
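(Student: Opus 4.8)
The plan is to read the ergodic distance $d_{E}$ as the long-run limit of the stochastic matrix distance of Theorem~\ref{thm2} applied to the $n$-step transition matrices, $d_{E}(M_{1},M_{2})=\lim_{n\to\infty}d(\boldsymbol{P}_{1}^{n},\boldsymbol{P}_{2}^{n})$, and to push this limit through the closed form in Eq.~\ref{eqn:smd}. The key structural observation is that, for an ergodic chain, \emph{every} row of $\boldsymbol{P}_{i}^{n}$ converges to the same distribution, so the product-over-rows structure of the stochastic matrix distance degenerates in the limit to a single simplex factor.

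First I would invoke the classical convergence theorem for finite ergodic Markov chains: since each $M_{i}$ ($i\in\{1,2\}$) is irreducible and aperiodic, its stochastic matrix is primitive, so by the Perron--Frobenius limit used in the proof of Theorem~\ref{thm1} (specialised to a stochastic matrix, whose Perron value is $1$ with right eigenvector the all-ones vector and left eigenvector $\boldsymbol{\pi}_{i}$) we get $\boldsymbol{P}_{i}^{n}\to\boldsymbol{\Pi}_{i}$ entrywise as $n\to\infty$, where $\boldsymbol{\Pi}_{i}$ is the stochastic matrix each of whose rows equals the unique stationary distribution $\boldsymbol{\pi}_{i}$; equivalently $P_{i}^{n}(x_{j},x_{\ell})\to\pi_{i}(x_{\ell})$ for all states $x_{j},x_{\ell}$, the limit being independent of the starting row $x_{j}$. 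Next, fixing a row $x_{i}$, the inner sum $\sum_{x_{j}}\sqrt{P_{1}^{n}(x_{i},x_{j})P_{2}^{n}(x_{i},x_{j})}$ in Eq.~\ref{eqn:smd} is a finite sum of continuous functions of the matrix entries, so it converges to $\sum_{x_{j}}\sqrt{\pi_{1}(x_{j})\pi_{2}(x_{j})}=\mathrm{BC}(\boldsymbol{\pi}_{1},\boldsymbol{\pi}_{2})$, a value no longer depending on $x_{i}$. Since $0\le\mathrm{BC}(\boldsymbol{\pi}_{1},\boldsymbol{\pi}_{2})\le 1$ (Cauchy--Schwarz with $\Vert\boldsymbol{\pi}_{i}\Vert_{1}=1$) and $\arccos$ is continuous on $[0,1]$, each term of the outer sum converges to $\arccos^{2}\mathrm{BC}(\boldsymbol{\pi}_{1},\boldsymbol{\pi}_{2})$. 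Summing the $|X|$ now-identical limiting terms and using continuity of the outer square root, $d(\boldsymbol{P}_{1}^{n},\boldsymbol{P}_{2}^{n})\to 2\sqrt{|X|}\,\arccos\mathrm{BC}(\boldsymbol{\pi}_{1},\boldsymbol{\pi}_{2})$; after the normalisation built into the product metric (which rescales a fully degenerate product of $|X|$ identical factors back to a single factor) the $\sqrt{|X|}$ cancels, leaving $d_{E}(M_{1},M_{2})=2\arccos\mathrm{BC}(\boldsymbol{\pi}_{1},\boldsymbol{\pi}_{2})$. It then only remains to observe that the right-hand side is exactly the Bhattacharyya angle between the two stationary distributions regarded as categorical distributions over $X$, tying the result back to the earlier sequence-space construction.

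The main obstacle is essentially non-existent at the analytic level: the one real input is the entrywise limit $\boldsymbol{P}_{i}^{n}\to\boldsymbol{\Pi}_{i}$, standard for ergodic finite chains, and everything else is a routine interchange of a limit with finite sums, square roots, $\arccos$, and the outer square root, all continuous on the relevant compact domains. The only point requiring care is the normalisation convention for the product metric: with the unnormalised form $d=2\sqrt{\sum_{x_{i}}\arccos^{2}(\cdots)}$ the computation above yields $2\sqrt{|X|}\,\arccos\mathrm{BC}(\boldsymbol{\pi}_{1},\boldsymbol{\pi}_{2})$, so the identity as stated presupposes the normalised product metric; either way the qualitative content — that in the ergodic long run the stochastic matrix distance sees only the stationary distributions — is unaffected.
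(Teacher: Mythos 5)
Your proposal is correct and follows essentially the same route as the paper's proof: Perron--Frobenius convergence of $\boldsymbol{P}_{i}^{N}$ to the rank-one matrix whose rows all equal $\boldsymbol{\pi}_{i}$, row-independence of the inner Bhattacharyya sum, and the resulting limit $2\sqrt{|X|}\,\arccos\mathrm{BC}(\boldsymbol{\pi}_{1},\boldsymbol{\pi}_{2})$ for the distance of Eq.~\ref{eqn:smd}. The $\sqrt{|X|}$ normalisation caveat you flag is also present in the paper, which disposes of it in its final step by dividing the limit by $\sqrt{n}$, so your reading of $d_{E}$ as the suitably rescaled long-run stochastic matrix distance matches the paper's own (equally implicit) convention.
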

\begin{proof}
    In the case of a ergodic (i.e. irreducible and aperiodic) Markov chain $\boldsymbol{P}_{1}$, with stationary distribution $\boldsymbol{\pi}_{1}$, via the Perron-Frobenius theorem the matrix power of the model's stochastic matrix has the limit,
    \begin{equation}
        \lim_{N\rightarrow\infty}\boldsymbol{P}_{1}^{N} = \textbf{1}^{T}\boldsymbol{\pi}_{1}
    \end{equation}
    Such a limit is just a statement that the matrix power converges to a matrix where each row is a copy of the stationary distribution. From here it is straightforward to show for the two models 
    \begin{equation}
        \sum_{x_{j}\in X}\sqrt{P_{1}(x_{i},x_{j})P_{2}(x_{i},x_{j})} = \mathrm{BC}(\boldsymbol{\pi}_{1},\boldsymbol{\pi}_{2})
    \end{equation}
    As the above is independent of $x_{i}$, the first summation inEquation~\ref{eqn:smd} reduces to
    \begin{equation}
        |X|\arccos^{2}\mathrm{BC}(\boldsymbol{\pi}_{1},\boldsymbol{\pi}_{2})
    \end{equation}
    For Markov chains with finite states we have $|X|=n$ and thus obtain,
    \begin{equation}
        \lim_{N\rightarrow\infty}d\left(\boldsymbol{P}_{1}^{N},\boldsymbol{P}_{2}^{N}\right) = 2\sqrt{n}\arccos\mathrm{BC}(\boldsymbol{\pi}_{1},\boldsymbol{\pi}_{2})
    \end{equation}
    Finally, notice that $\lim_{N\rightarrow\infty}^{n\rightarrow\infty} d\left(\boldsymbol{P}_{1}^{N},\boldsymbol{P}_{2}^{N}\right)/\sqrt{n}$ is finite, independent of the value of $n$ and thus equal to the Bhattacharyya angle between the stationary distributions of the Markov chains. 
\end{proof}
Intuitively this makes sense as the long term run of an ergodic Markov chain converges to its stationary distribution. Therefore any differences between two Markov chains should only be accounted for by their stationary distributions, and not initial conditions. As noted in the mixing times section, the stochastic matrix distance for ergodic chains is also identical to considering the distance between the asymptotic distributions from starting in any arbitrary initial distribution, i.e. $d^{(n)}(\boldsymbol{e}_{x}^{T}\boldsymbol{P}^{N}_{1},\boldsymbol{e}_{y}^{T}\boldsymbol{P}^{N}_{2}) \rightarrow d_{E}(M_{1},M_{2})$ as $N\rightarrow\infty$. This adds further weight to using the Bhattacharyya angle and stochastic matrix distance as they have a natural correspondence on ergodic chains.  

\section{Numerical Simulation}

To illustrate the use of the new stochastic matrix distance, and compare it alongside other typical distance functions, we simulate the random generation of four clusters of stochastic matrices. For each cluster, we generate a set of $3\times 3$ stochastic matrices where each row is drawn from a Dirichlet distribution with parameter vector $\boldsymbol{\alpha}=(\alpha_{1},\alpha_{2},\alpha_{3})$. The vector $\boldsymbol{\alpha}$ is a strictly positive and real vector, and controls the centre and spread of the corresponding Dirichlet distribution. This setup means we can tune the centres and spread of each cluster while making each stochastic matrix ``similar'' in both row values and overall matrix values. Initially we fix a distinguished cluster as a reference and denote this with Dirichlet parameters $\boldsymbol{\alpha}_{0}$. The remaining three clusters are generated with distinct parameters $\boldsymbol{\alpha}_{i}$. Finally, we increment along the transformation $\boldsymbol{\alpha}_{i}(t)=(1-t)\boldsymbol{\alpha}_{i}+t\boldsymbol{\alpha}_{0}$ for different values of $t\in[0,1]$, observing the accuracy of clustering for various distance functions. A pseudo-procedure for the simulation is laid out in~\ref{proc:simulation}. We choose the (adjusted) Rand index as the figure of merit for the clustering accuracy and average the index over many runs to mitigate observing a single extreme result~\cite{Steinley2004PropertiesIndex.}. Intuitively, the Rand index measures agreement, across two given clusterings, by checking how many pairs of clustered data elements are in the same cluster, and which pairs of clustered data elements are not in the same cluster. Ideally, all pairings according to one clustering match all parings in the other cluster, giving complete agreement. In practice it is better to adjust the Rand index under the null hypothesis assumption the clusterings were randomly assigned with equal probability by permuting the data elements, hence giving the adjusted Rand index as a measure of agreement.

Formally, given $N$ data points $X = \lbrace x_{i}|i\in[N]\rbrace$, for two clusterings $U=\lbrace U_{1},\ldots,U_{R}\rbrace$ and $V=\lbrace V_{1},\ldots,V_{C}\rbrace$ we can quantify the two clusterings contingency as $n_{ij}=|U_{i}\cap V_{j}|$. The collection of integers $n_{ij}$ count the overlap between different clusters. In consequence, we define the auxiliary quantities,

\begin{subequations}
    \begin{gather}
        n_{i+} = \sum_{j=1}^{C}n_{ij},\quad 
        n_{+j} = \sum_{i=1}^{R}n_{ij}
    \end{gather}
    \begin{gather}
        \bar{t} = \sum_{i=1}^{R}\sum_{j=1}^{C}n^{2}_{ij},\quad
        \bar{r} = \sum_{i=1}^{R}n^{2}_{i+},\quad
        \bar{c} = \sum_{j=1}^{C}n^{2}_{+j}
    \end{gather}
    \begin{gather}
        \begin{gathered}
            a = \frac{\bar{t}-N}{2},\quad
            b = \frac{\bar{r}-\bar{t}}{2},\quad
            c = \frac{\bar{c}-\bar{t}}{2},\\
            d = \frac{\bar{t}-\bar{r}-\bar{c}+N^{2}}{2}
        \end{gathered}       
    \end{gather}
\end{subequations}
Taken together, these auxiliary quantities represent the agreement between two different clusterings of the data $X$. We can collect the above into a matrix $\boldsymbol{M}$ and express the adjusted Rand index in a compact manner as,

\begin{equation} \label{eq:two_equations}
  \begin{aligned}
    \boldsymbol{M} = 
    \begin{pmatrix}
        a & b \\
        c & d
    \end{pmatrix}, \quad & 
    \boldsymbol{P} = 
    \begin{pmatrix}
        0 & 1 \\
        1 & 0
    \end{pmatrix}
  \end{aligned}
\end{equation}

\begin{equation}
    \mathrm{Rand_{adjusted}} = \frac{\binom{N}{2}\mathrm{tr}\boldsymbol{M}-\mathrm{tr}\boldsymbol{P}\boldsymbol{M}^{2}}{\binom{N}{2}^{2}-\mathrm{tr}\boldsymbol{P}\boldsymbol{M}^{2}}
\end{equation}
Where $\boldsymbol{P}$ is known as the $2\times 2$ exchange matrix (i.e. it exchanges rows and columns). We compared the symmetric Kullback-Leibler divergence rate, matrix distances induced by the $L^{1}$ and $L^{2}$ norms, and our new stochastic matrix distances. We use cluster Dirichlet parameters, visualised in Fig.~\ref{fig:ternary_plot}, $\boldsymbol{\alpha}_{0} = (8,8,8)$, $\boldsymbol{\alpha}_{1} = (8,2,2)$, $\boldsymbol{\alpha}_{2} = (2,8,2)$, $\boldsymbol{\alpha}_{3} = (2,2,8)$. 

The results of the simulation are shown in Fig.~\ref{fig:rand_index}, where a value of one on the vertical axis indicates clusters have been assigned correctly, and lower values mean cluster memberships are incorrect. We observe that the metrics all have a similar behaviour. Moreover, the stochastic matrix distance performance is comparable to the other functions. All functions approach a mean adjusted Rand score of zero as the distributions for clusters converge i.e. the clusters are indistinguishable and hence cluster membership can not be determined. We stress that the purpose of this simulation is not to pick a ``best'' distance to perform clustering, as this is too subjective, but to show our new distance function indeed behaves sensibly in a controlled setting.

\begin{figure}
    \centering
    \includegraphics[width=1\linewidth]{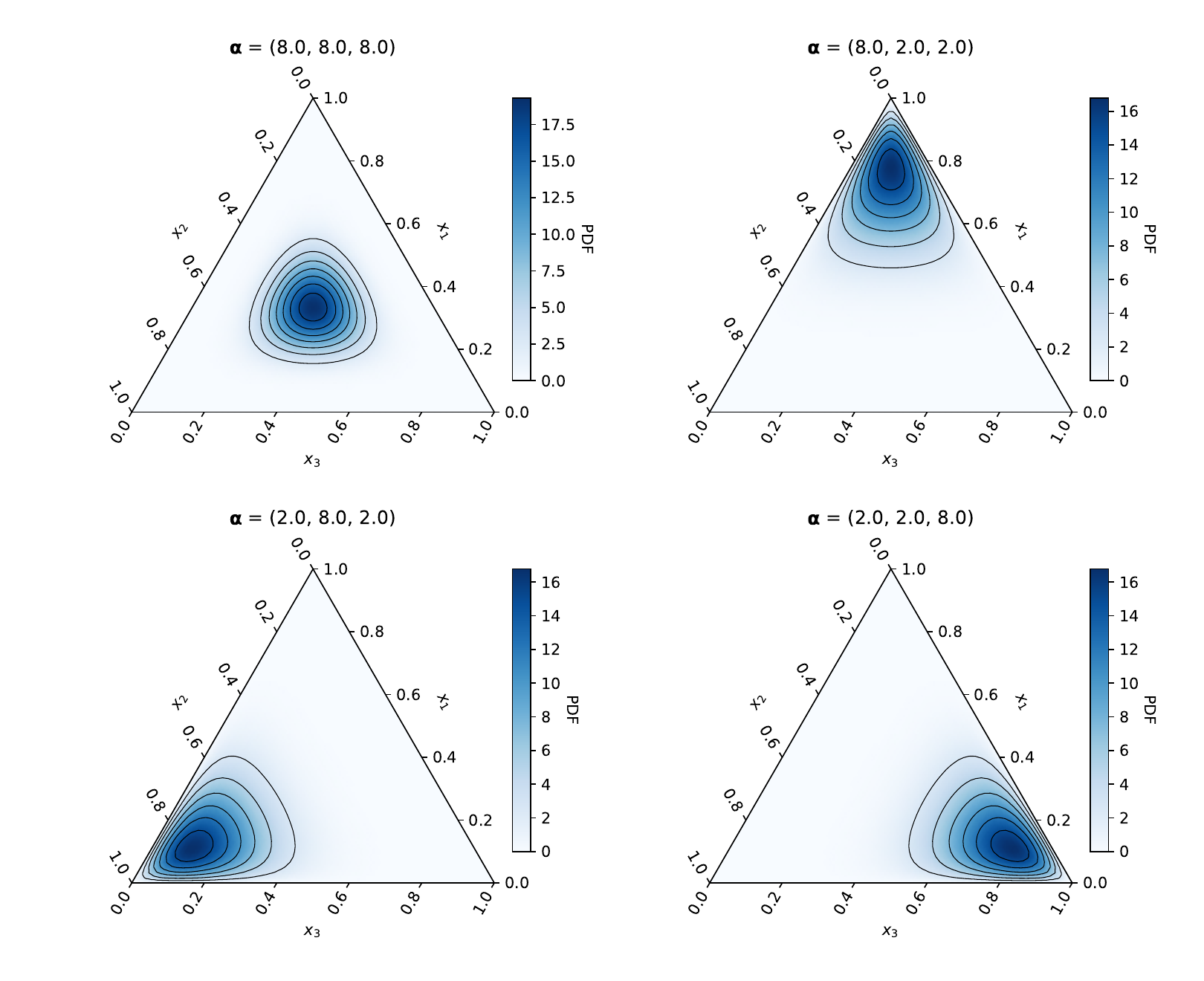}
    \caption{Ternary plot of the four base Dirichlet distributions used to generate clusters of stochastic matrices. The top-left density plot serves as a reference distribution. The parameters $\boldsymbol{\alpha}_{i}(t)$ approach $\boldsymbol{\alpha}_{0}$ linearly such that all distributions eventually coincide, and thus cannot be distinguished.}
    \label{fig:ternary_plot}
\end{figure}

Applications to real world data and settings will be a better test of the new stochastic matrix distance. We do emphasise though the stochastic matrix distance is a true distance measure, purposefully motivated through information geometry and so seems a natural distance measure to pick in the sense of probability theory. In terms of machine learning, both the sequence space and stochastic matrix space distances put forward here seem to be more principled as loss functions for categorical probability models. 

\begin{algorithm}[htbp] 
  \DontPrintSemicolon 
  \SetKwInOut{KwRequire}{Require}
  \SetKwInOut{KwEnsure}{Ensure}

  \KwRequire{Cluster distribution parameters $\lbrace\boldsymbol{\alpha}_{k}\rbrace_{k=0}^{K}$ (which serve as the true labels for generated data), $T$ (time steps), $R$ (repetitions), $N$ (cluster size), $m$ (distance function)}
  \KwEnsure{For function $m$, $K+1\ge 2$ clusters with $N$ members each, output an array of pairs $(t,\bar{s})$ for step $t$, mean Rand index $\bar{s}$ over $R$ clustering repetitions} 

  \caption{Steps to generate simulated data for testing different stochastic matrix distance functions. The Rand index is a measure of how well data has been clustered according to the true cluster label (which is the cluster distribution parameter $\alpha_{k}$)}.\label{proc:simulation}

  $t \leftarrow 0$\;
  \While{$t \le T$}{
    Update cluster parameters for time t\;
    $\alpha_k(t) \leftarrow (1 - t)\alpha_k + t\alpha_0$\; 
    $r \leftarrow 0$\;
    \While{$r < R$}{
      Generate $K \cdot N$ stochastic matrices based on parameters $\{\alpha_k(t)\}_{k=0}^K$\;
      Using function $m$, calculate the distance matrix for all pairs of the generated matrices\;
      Cluster the $K \cdot N$ matrices into $K$ labels using the distance matrix\;
      Calculate the Rand index $s(r)$ comparing the clustering labels to the known generative clusters\;
      $r \leftarrow r + 1$\;
    }
    Calculate the mean (adjusted) Rand index for time $t$: $\bar{s}(t) \leftarrow \frac{1}{R} \sum_{r=0}^{R-1} s(r)$\; 
    $t \leftarrow t + 1$\;
  }
\end{algorithm}

\begin{figure}
    \centering
    \includegraphics[width=1\linewidth]{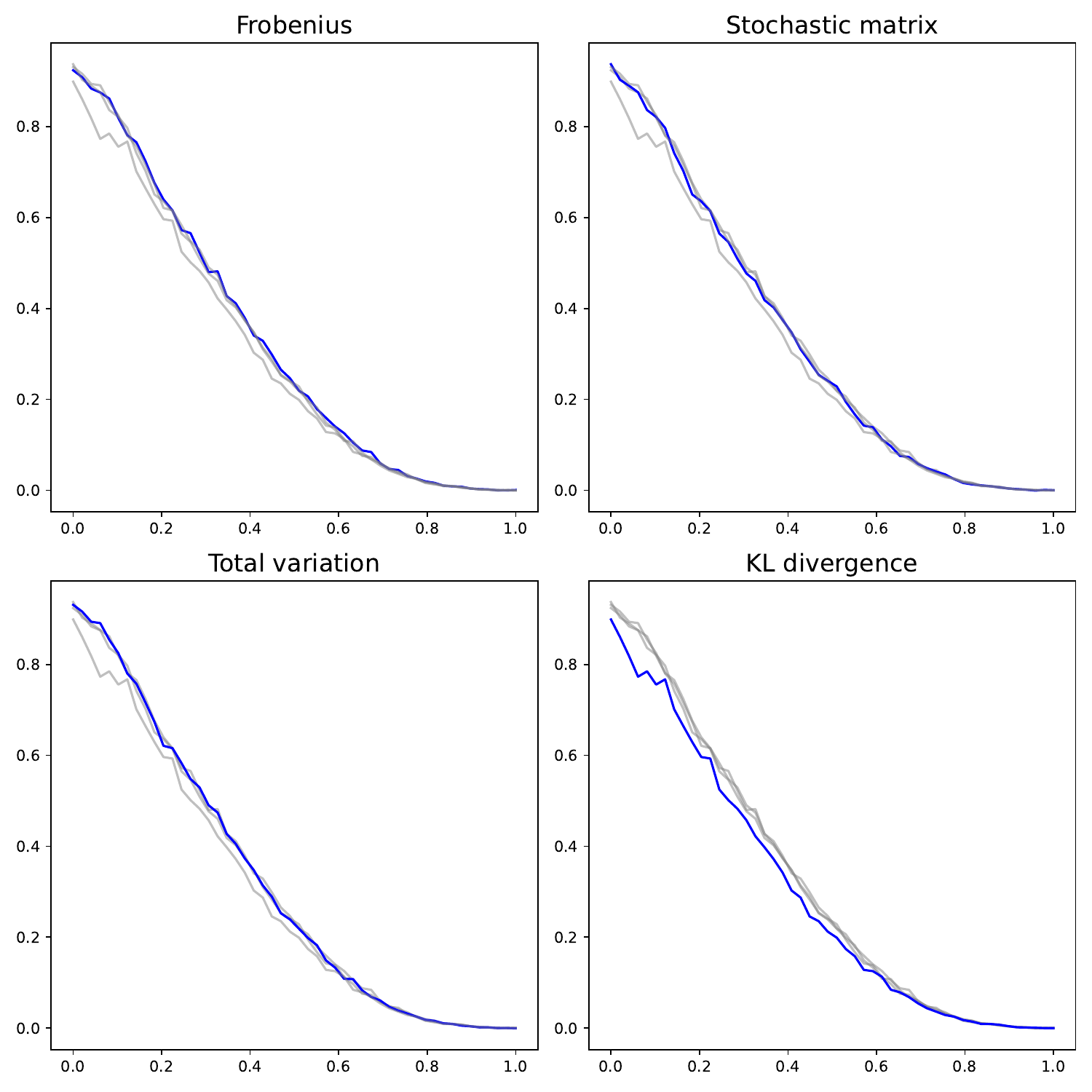}
    \caption{Comparison of simulation for the four selected distance functions. The vertical axis indicates the average (adjusted) Rand score attained for the clustering according to a given distance function. The horizontal axis indicates the step number (i.e. $t\in\lbrace 0,1/T,\ldots,1\rbrace$) starting from the initial distributions to the final distributions coinciding.}
    \label{fig:rand_index}
\end{figure}

\section{Conclusion}

We have developed the use of the Bhattacharyya angle as a distance function on discrete Markov chain sequences which is well motivated and respects the intrinsic probability properties of Markov chains. Previous work in discrete Markov theory had shown the use of various functions to compare discrete chains, but this work derives a true distance function which is computable (both analytically and numerically) and valid for any pair of chains. 

However these distances explicitly require the initial distributions of a chain and can be extremely sensitive to small changes. We therefore considered a distance on the space of stochastic matrices, which can give a more robust measure of similarity. Moreover, it is valid for any pair of stochastic matrices, is computable and through Eq.~\ref{eqn:ergodic_asymp_distance} in the case of ergodic chains the asymptotic step distance is the same as considering distances on the sequence space. We were also able to find compact analytical results for very broad classes of stochastic matrices, namely matrix products which are diagonalisable or primitive for sequences spaces, and ergodoic chains for the stochastic matrix distance. Using underlying local structures (i.e. state to state jumps) to understand Markov chain similarity in this sense is more akin to comparing Hamiltonians or Lagrangians in physics, rather than looking at the whole possible trace space dependent on initial distributions. 

Returning to our initial motivation, we wanted to find a way of comparing Markov chains for use in healthcare services. For example, if the emergency department of a hospital is described by a Markov chain, we can use our methods of comparing two hospitals. Moreover, each initial distribution of the Markov chain corresponds to a starting event for a patient. As there are many different patients, comparing the sequence space generated by all different patients across two hospitals doesn't have great meaning. What does have more meaning is the direct comparison of the stochastic matrices which represent the generative processes of the hospitals. Therefore our new stochastic matrix distance should have much applicability to comparing health services in any setting represented by Markov chains. Of course this if true for any other field in general.

Despite the number of results, there is still much more worthwhile future work. For sequence spaces, finding the most general conditions for the matrix $\boldsymbol{R}$ to give a closed form solution for $d^{\omega}(M_{1},M_{2})$ would be useful. We expect the omission of eigenvalues of $-1$ from the spectrum of $\boldsymbol{R}$ can be dropped, as we know the Bhattacharyya angle is always convergent. Also the exploration of mixing times and the application of the Bhattacharyya angle to general mixing time bounds has only received an elementary review here. Tighter bounds are surely possible. Applications of the Bhattacharyya angle, and its stochastic matrix counterpart, also seem readily available to machine learning in cases where the problem requires the evaluation of a model which outputs categorical probabilities. As both distances are both analytically and numerically approachable, it would be pleasant to see it incorporated in to computational frameworks, such as for gradient boosting~\cite{Chen2016XGBoost:System}. 

\begin{acknowledgments}
The authors would like to thank Sam Power, George Deligiannidis, Christopher Yau for their constructive and supportive comments for this paper. ARL would like to thank Sara Oriana Gomes Tavares for their very elegant argument on the convergence of the Bhattacharyya angle, which much improved upon his own. 

ARL acknowledges the receipt of studentship awards from the Health Data Research UK-The Alan Turing Institute Wellcome PhD Programme in Health Data Science (Grant Ref: 218529/Z/19/Z). P.
Ti\v{n}o was supported by the EPSRC Prosperity Partnerships
grant ARCANE, EP/X025454/1.
\end{acknowledgments}

\onecolumngrid 
\appendix

\section{Proof of theorem 2}

\begin{proof}
\label{proof:thm2}
    Using the product structure of stochastic matrices, we can decompose two stochastic matrices as
    \begin{equation}
        \begin{aligned}
            \boldsymbol{P}_{1} &= \left(\boldsymbol{p}_{1},\ldots,\boldsymbol{p}_{n}\right)\\
            \boldsymbol{P}_{2} &= \left(\boldsymbol{q}_{1},\ldots,\boldsymbol{q}_{n}\right)
        \end{aligned}
    \end{equation}
    Each $\boldsymbol{p}_{i}$ and $\boldsymbol{q}_{i}$ are probability vectors in their own right, with dimension $n$, and each matrix are therefore elements of the product space $\Delta\times\cdots\times\Delta$. This means we can construct the induced minimum length geodesic curve distance from each underlying simplex space. In other words,
    \begin{equation}
        d\left(\left(\boldsymbol{p}_{1},\ldots,\boldsymbol{p}_{n}\right), \left(\boldsymbol{q}_{1},\ldots,\boldsymbol{q}_{n}\right)\right) = \sqrt{d^{2}(\boldsymbol{p}_{1},\boldsymbol{q}_{1}) + \ldots + d^{2}(\boldsymbol{p}_{n},\boldsymbol{q}_{n})}
    \end{equation}
    As the underlying simplex spaces can be equipped with the Bhattacharyya angle, we can write the induced distance as
    \begin{equation}
        d\left(\left(\boldsymbol{p}_{1},\ldots,\boldsymbol{p}_{n}\right), \left(\boldsymbol{q}_{1},\ldots,\boldsymbol{q}_{n}\right)\right) = 2\sqrt{\sum_{i=1}^{n}\arccos^{2}\mathrm{BC}(\boldsymbol{p}_{i},\boldsymbol{q}_{i})}
    \end{equation}
   Noting $\boldsymbol{p}_{i}=(p_{i1},\ldots,p_{in})$, $\boldsymbol{q}_{1}=(q_{i1},\ldots,q_{in})$, where $p_{ij}=P_{1}(x_{i},x_{j})$ and $q_{ij}=P_{2}(x_{i},x_{j})$, it is easy to see the Bhattacharyya coefficient expressions can be written as $\mathrm{BC}(\boldsymbol{p}_{i},\boldsymbol{q}_{i}) = \sum_{j=1}^{n}\sqrt{p_{ij}q_{ij}}$ which allows us to write
    \begin{equation}
        d\left(\left(\boldsymbol{p}_{1},\ldots,\boldsymbol{p}_{n}\right), \left(\boldsymbol{q}_{1},\ldots,\boldsymbol{q}_{n}\right)\right) = 2\sqrt{\sum_{x_{i}\in X}\arccos^{2}\sum_{x_{j}\in X}\sqrt{P_{1}(x_{i},x_{j})P_{2}(x_{i},x_{j})}}
    \end{equation}
\end{proof}


\twocolumngrid
\bibliography{references}

\begin{thebibliography}{56}%
\makeatletter
\providecommand \@ifxundefined [1]{%
 \@ifx{#1\undefined}
}%
\providecommand \@ifnum [1]{%
 \ifnum #1\expandafter \@firstoftwo
 \else \expandafter \@secondoftwo
 \fi
}%
\providecommand \@ifx [1]{%
 \ifx #1\expandafter \@firstoftwo
 \else \expandafter \@secondoftwo
 \fi
}%
\providecommand \natexlab [1]{#1}%
\providecommand \enquote  [1]{``#1''}%
\providecommand \bibnamefont  [1]{#1}%
\providecommand \bibfnamefont [1]{#1}%
\providecommand \citenamefont [1]{#1}%
\providecommand \href@noop [0]{\@secondoftwo}%
\providecommand \href [0]{\begingroup \@sanitize@url \@href}%
\providecommand \@href[1]{\@@startlink{#1}\@@href}%
\providecommand \@@href[1]{\endgroup#1\@@endlink}%
\providecommand \@sanitize@url [0]{\catcode `\\12\catcode `\$12\catcode
  `\&12\catcode `\#12\catcode `\^12\catcode `\_12\catcode `\%12\relax}%
\providecommand \@@startlink[1]{}%
\providecommand \@@endlink[0]{}%
\providecommand \url  [0]{\begingroup\@sanitize@url \@url }%
\providecommand \@url [1]{\endgroup\@href {#1}{\urlprefix }}%
\providecommand \urlprefix  [0]{URL }%
\providecommand \Eprint [0]{\href }%
\providecommand \doibase [0]{https://doi.org/}%
\providecommand \selectlanguage [0]{\@gobble}%
\providecommand \bibinfo  [0]{\@secondoftwo}%
\providecommand \bibfield  [0]{\@secondoftwo}%
\providecommand \translation [1]{[#1]}%
\providecommand \BibitemOpen [0]{}%
\providecommand \bibitemStop [0]{}%
\providecommand \bibitemNoStop [0]{.\EOS\space}%
\providecommand \EOS [0]{\spacefactor3000\relax}%
\providecommand \BibitemShut  [1]{\csname bibitem#1\endcsname}%
\let\auto@bib@innerbib\@empty
\bibitem [{\citenamefont {Omer}\ and\ \citenamefont
  {Woldegebreal}(2022)}]{Omer2022ReviewSystems}%
  \BibitemOpen
  \bibfield  {author} {\bibinfo {author} {\bibfnamefont {A.~S.}\ \bibnamefont
  {Omer}}\ and\ \bibinfo {author} {\bibfnamefont {D.~H.}\ \bibnamefont
  {Woldegebreal}},\ }in\ \href
  {https://doi.org/10.1007/978-3-031-06374-9{\_}24} {\emph {\bibinfo
  {booktitle} {Lecture Notes of the Institute for Computer Sciences,
  Social-Informatics and Telecommunications Engineering, LNICST}}},\ Vol.\
  \bibinfo {volume} {443 LNICST}\ (\bibinfo {year} {2022})\BibitemShut
  {NoStop}%
\bibitem [{\citenamefont {Abu~Alsheikh}\ \emph {et~al.}(2015)\citenamefont
  {Abu~Alsheikh}, \citenamefont {Hoang}, \citenamefont {Niyato}, \citenamefont
  {Tan},\ and\ \citenamefont {Lin}}]{AbuAlsheikh2015MarkovSurvey}%
  \BibitemOpen
  \bibfield  {author} {\bibinfo {author} {\bibfnamefont {M.}~\bibnamefont
  {Abu~Alsheikh}}, \bibinfo {author} {\bibfnamefont {D.~T.}\ \bibnamefont
  {Hoang}}, \bibinfo {author} {\bibfnamefont {D.}~\bibnamefont {Niyato}},
  \bibinfo {author} {\bibfnamefont {H.~P.}\ \bibnamefont {Tan}},\ and\ \bibinfo
  {author} {\bibfnamefont {S.}~\bibnamefont {Lin}},\ }\bibfield  {journal}
  {\bibinfo  {journal} {IEEE Communications Surveys and Tutorials}\ }\textbf
  {\bibinfo {volume} {17}},\ \href {https://doi.org/10.1109/COMST.2015.2420686}
  {10.1109/COMST.2015.2420686} (\bibinfo {year} {2015})\BibitemShut {NoStop}%
\bibitem [{\citenamefont {Roy}(2020)}]{Roy2020ConvergenceCarlo}%
  \BibitemOpen
  \bibfield  {author} {\bibinfo {author} {\bibfnamefont {V.}~\bibnamefont
  {Roy}},\ }\href {https://doi.org/10.1146/annurev-statistics-031219-041300}
  {\bibinfo {title} {{Convergence diagnostics for markov chain monte carlo}}}
  (\bibinfo {year} {2020})\BibitemShut {NoStop}%
\bibitem [{\citenamefont {Hamdi}\ \emph {et~al.}(2021)\citenamefont {Hamdi},
  \citenamefont {Mahdi}, \citenamefont {Abood}, \citenamefont {Mohammed},
  \citenamefont {Abbas},\ and\ \citenamefont {Mohammed}}]{Hamdi2021ANetworks}%
  \BibitemOpen
  \bibfield  {author} {\bibinfo {author} {\bibfnamefont {M.~M.}\ \bibnamefont
  {Hamdi}}, \bibinfo {author} {\bibfnamefont {H.~F.}\ \bibnamefont {Mahdi}},
  \bibinfo {author} {\bibfnamefont {M.~S.}\ \bibnamefont {Abood}}, \bibinfo
  {author} {\bibfnamefont {R.~Q.}\ \bibnamefont {Mohammed}}, \bibinfo {author}
  {\bibfnamefont {A.~D.}\ \bibnamefont {Abbas}},\ and\ \bibinfo {author}
  {\bibfnamefont {A.~H.}\ \bibnamefont {Mohammed}},\ }\bibfield  {journal}
  {\bibinfo  {journal} {IOP Conference Series: Materials Science and
  Engineering}\ }\textbf {\bibinfo {volume} {1076}},\ \href
  {https://doi.org/10.1088/1757-899x/1076/1/012034}
  {10.1088/1757-899x/1076/1/012034} (\bibinfo {year} {2021})\BibitemShut
  {NoStop}%
\bibitem [{\citenamefont {Mor}\ \emph {et~al.}(2021)\citenamefont {Mor},
  \citenamefont {Garhwal},\ and\ \citenamefont {Kumar}}]{Mor2021AApplications}%
  \BibitemOpen
  \bibfield  {author} {\bibinfo {author} {\bibfnamefont {B.}~\bibnamefont
  {Mor}}, \bibinfo {author} {\bibfnamefont {S.}~\bibnamefont {Garhwal}},\ and\
  \bibinfo {author} {\bibfnamefont {A.}~\bibnamefont {Kumar}},\ }\bibfield
  {journal} {\bibinfo  {journal} {Archives of Computational Methods in
  Engineering}\ }\textbf {\bibinfo {volume} {28}},\ \href
  {https://doi.org/10.1007/s11831-020-09422-4} {10.1007/s11831-020-09422-4}
  (\bibinfo {year} {2021})\BibitemShut {NoStop}%
\bibitem [{\citenamefont {van Zelst}\ \emph {et~al.}(2021)\citenamefont {van
  Zelst}, \citenamefont {Mannhardt}, \citenamefont {de~Leoni},\ and\
  \citenamefont {Koschmider}}]{vanZelst2021EventTaxonomy}%
  \BibitemOpen
  \bibfield  {author} {\bibinfo {author} {\bibfnamefont {S.~J.}\ \bibnamefont
  {van Zelst}}, \bibinfo {author} {\bibfnamefont {F.}~\bibnamefont
  {Mannhardt}}, \bibinfo {author} {\bibfnamefont {M.}~\bibnamefont
  {de~Leoni}},\ and\ \bibinfo {author} {\bibfnamefont {A.}~\bibnamefont
  {Koschmider}},\ }\bibfield  {journal} {\bibinfo  {journal} {Granular
  Computing}\ }\textbf {\bibinfo {volume} {6}},\ \href
  {https://doi.org/10.1007/s41066-020-00226-2} {10.1007/s41066-020-00226-2}
  (\bibinfo {year} {2021})\BibitemShut {NoStop}%
\bibitem [{\citenamefont {Martin}\ \emph {et~al.}(2022)\citenamefont {Martin},
  \citenamefont {Van~Houdt},\ and\ \citenamefont
  {Janssenswillen}}]{Martin2022DaQAPO:Assessment}%
  \BibitemOpen
  \bibfield  {author} {\bibinfo {author} {\bibfnamefont {N.}~\bibnamefont
  {Martin}}, \bibinfo {author} {\bibfnamefont {G.}~\bibnamefont {Van~Houdt}},\
  and\ \bibinfo {author} {\bibfnamefont {G.}~\bibnamefont {Janssenswillen}},\
  }\href {https://doi.org/10.1016/J.ESWA.2021.116274} {\bibfield  {journal}
  {\bibinfo  {journal} {Expert Systems with Applications}\ }\textbf {\bibinfo
  {volume} {191}},\ \bibinfo {pages} {116274} (\bibinfo {year}
  {2022})}\BibitemShut {NoStop}%
\bibitem [{\citenamefont {Munoz-Gama}\ \emph {et~al.}(2022)\citenamefont
  {Munoz-Gama}, \citenamefont {Martin}, \citenamefont {Fernandez-Llatas},
  \citenamefont {Johnson}, \citenamefont {Sep{\'{u}}lveda}, \citenamefont
  {Helm}, \citenamefont {Galvez-Yanjari}, \citenamefont {Rojas}, \citenamefont
  {Martinez-Millana}, \citenamefont {Aloini}, \citenamefont {Amantea},
  \citenamefont {Andrews}, \citenamefont {Arias}, \citenamefont {Beerepoot},
  \citenamefont {Benevento}, \citenamefont {Burattin}, \citenamefont {Capurro},
  \citenamefont {Carmona}, \citenamefont {Comuzzi}, \citenamefont {Dalmas},
  \citenamefont {de~la Fuente}, \citenamefont {Di~Francescomarino},
  \citenamefont {Di~Ciccio}, \citenamefont {Gatta}, \citenamefont {Ghidini},
  \citenamefont {Gonzalez-Lopez}, \citenamefont {Ibanez-Sanchez}, \citenamefont
  {Klasky}, \citenamefont {Prima~Kurniati}, \citenamefont {Lu}, \citenamefont
  {Mannhardt}, \citenamefont {Mans}, \citenamefont {Marcos}, \citenamefont
  {Medeiros~de Carvalho}, \citenamefont {Pegoraro}, \citenamefont {Poon},
  \citenamefont {Pufahl}, \citenamefont {Reijers}, \citenamefont {Remy},
  \citenamefont {Rinderle-Ma}, \citenamefont {Sacchi}, \citenamefont {Seoane},
  \citenamefont {Song}, \citenamefont {Stefanini}, \citenamefont {Sulis},
  \citenamefont {ter Hofstede}, \citenamefont {Toussaint}, \citenamefont
  {Traver}, \citenamefont {Valero-Ramon}, \citenamefont {Weerd}, \citenamefont
  {van~der Aalst}, \citenamefont {Vanwersch}, \citenamefont {Weske},
  \citenamefont {Wynn},\ and\ \citenamefont
  {Zerbato}}]{Munoz-Gama2022ProcessChallenges}%
  \BibitemOpen
  \bibfield  {author} {\bibinfo {author} {\bibfnamefont {J.}~\bibnamefont
  {Munoz-Gama}}, \bibinfo {author} {\bibfnamefont {N.}~\bibnamefont {Martin}},
  \bibinfo {author} {\bibfnamefont {C.}~\bibnamefont {Fernandez-Llatas}},
  \bibinfo {author} {\bibfnamefont {O.~A.}\ \bibnamefont {Johnson}}, \bibinfo
  {author} {\bibfnamefont {M.}~\bibnamefont {Sep{\'{u}}lveda}}, \bibinfo
  {author} {\bibfnamefont {E.}~\bibnamefont {Helm}}, \bibinfo {author}
  {\bibfnamefont {V.}~\bibnamefont {Galvez-Yanjari}}, \bibinfo {author}
  {\bibfnamefont {E.}~\bibnamefont {Rojas}}, \bibinfo {author} {\bibfnamefont
  {A.}~\bibnamefont {Martinez-Millana}}, \bibinfo {author} {\bibfnamefont
  {D.}~\bibnamefont {Aloini}}, \bibinfo {author} {\bibfnamefont {I.~A.}\
  \bibnamefont {Amantea}}, \bibinfo {author} {\bibfnamefont {R.}~\bibnamefont
  {Andrews}}, \bibinfo {author} {\bibfnamefont {M.}~\bibnamefont {Arias}},
  \bibinfo {author} {\bibfnamefont {I.}~\bibnamefont {Beerepoot}}, \bibinfo
  {author} {\bibfnamefont {E.}~\bibnamefont {Benevento}}, \bibinfo {author}
  {\bibfnamefont {A.}~\bibnamefont {Burattin}}, \bibinfo {author}
  {\bibfnamefont {D.}~\bibnamefont {Capurro}}, \bibinfo {author} {\bibfnamefont
  {J.}~\bibnamefont {Carmona}}, \bibinfo {author} {\bibfnamefont
  {M.}~\bibnamefont {Comuzzi}}, \bibinfo {author} {\bibfnamefont
  {B.}~\bibnamefont {Dalmas}}, \bibinfo {author} {\bibfnamefont
  {R.}~\bibnamefont {de~la Fuente}}, \bibinfo {author} {\bibfnamefont
  {C.}~\bibnamefont {Di~Francescomarino}}, \bibinfo {author} {\bibfnamefont
  {C.}~\bibnamefont {Di~Ciccio}}, \bibinfo {author} {\bibfnamefont
  {R.}~\bibnamefont {Gatta}}, \bibinfo {author} {\bibfnamefont
  {C.}~\bibnamefont {Ghidini}}, \bibinfo {author} {\bibfnamefont
  {F.}~\bibnamefont {Gonzalez-Lopez}}, \bibinfo {author} {\bibfnamefont
  {G.}~\bibnamefont {Ibanez-Sanchez}}, \bibinfo {author} {\bibfnamefont
  {H.~B.}\ \bibnamefont {Klasky}}, \bibinfo {author} {\bibfnamefont
  {A.}~\bibnamefont {Prima~Kurniati}}, \bibinfo {author} {\bibfnamefont
  {X.}~\bibnamefont {Lu}}, \bibinfo {author} {\bibfnamefont {F.}~\bibnamefont
  {Mannhardt}}, \bibinfo {author} {\bibfnamefont {R.}~\bibnamefont {Mans}},
  \bibinfo {author} {\bibfnamefont {M.}~\bibnamefont {Marcos}}, \bibinfo
  {author} {\bibfnamefont {R.}~\bibnamefont {Medeiros~de Carvalho}}, \bibinfo
  {author} {\bibfnamefont {M.}~\bibnamefont {Pegoraro}}, \bibinfo {author}
  {\bibfnamefont {S.~K.}\ \bibnamefont {Poon}}, \bibinfo {author}
  {\bibfnamefont {L.}~\bibnamefont {Pufahl}}, \bibinfo {author} {\bibfnamefont
  {H.~A.}\ \bibnamefont {Reijers}}, \bibinfo {author} {\bibfnamefont
  {S.}~\bibnamefont {Remy}}, \bibinfo {author} {\bibfnamefont {S.}~\bibnamefont
  {Rinderle-Ma}}, \bibinfo {author} {\bibfnamefont {L.}~\bibnamefont {Sacchi}},
  \bibinfo {author} {\bibfnamefont {F.}~\bibnamefont {Seoane}}, \bibinfo
  {author} {\bibfnamefont {M.}~\bibnamefont {Song}}, \bibinfo {author}
  {\bibfnamefont {A.}~\bibnamefont {Stefanini}}, \bibinfo {author}
  {\bibfnamefont {E.}~\bibnamefont {Sulis}}, \bibinfo {author} {\bibfnamefont
  {A.~H.}\ \bibnamefont {ter Hofstede}}, \bibinfo {author} {\bibfnamefont
  {P.~J.}\ \bibnamefont {Toussaint}}, \bibinfo {author} {\bibfnamefont
  {V.}~\bibnamefont {Traver}}, \bibinfo {author} {\bibfnamefont
  {Z.}~\bibnamefont {Valero-Ramon}}, \bibinfo {author} {\bibfnamefont
  {I.~v.~d.}\ \bibnamefont {Weerd}}, \bibinfo {author} {\bibfnamefont {W.~M.}\
  \bibnamefont {van~der Aalst}}, \bibinfo {author} {\bibfnamefont
  {R.}~\bibnamefont {Vanwersch}}, \bibinfo {author} {\bibfnamefont
  {M.}~\bibnamefont {Weske}}, \bibinfo {author} {\bibfnamefont {M.~T.}\
  \bibnamefont {Wynn}},\ and\ \bibinfo {author} {\bibfnamefont
  {F.}~\bibnamefont {Zerbato}},\ }\href
  {https://doi.org/10.1016/J.JBI.2022.103994} {\bibfield  {journal} {\bibinfo
  {journal} {Journal of Biomedical Informatics}\ }\textbf {\bibinfo {volume}
  {127}},\ \bibinfo {pages} {103994} (\bibinfo {year} {2022})}\BibitemShut
  {NoStop}%
\bibitem [{\citenamefont {De~Roock}\ and\ \citenamefont
  {Martin}(2022)}]{DeRoock2022ProcessArt.}%
  \BibitemOpen
  \bibfield  {author} {\bibinfo {author} {\bibfnamefont {E.}~\bibnamefont
  {De~Roock}}\ and\ \bibinfo {author} {\bibfnamefont {N.}~\bibnamefont
  {Martin}},\ }\href {https://doi.org/10.1016/j.jbi.2022.103995} {\bibfield
  {journal} {\bibinfo  {journal} {Journal of biomedical informatics}\ }\textbf
  {\bibinfo {volume} {127}},\ \bibinfo {pages} {103995} (\bibinfo {year}
  {2022})}\BibitemShut {NoStop}%
\bibitem [{\citenamefont {Claudio}\ \emph {et~al.}(2023)\citenamefont
  {Claudio}, \citenamefont {Moyce}, \citenamefont {Albano}, \citenamefont
  {Ibe}, \citenamefont {Miller},\ and\ \citenamefont
  {O’Leary}}]{Claudio2023AInterventions}%
  \BibitemOpen
  \bibfield  {author} {\bibinfo {author} {\bibfnamefont {D.}~\bibnamefont
  {Claudio}}, \bibinfo {author} {\bibfnamefont {S.}~\bibnamefont {Moyce}},
  \bibinfo {author} {\bibfnamefont {T.}~\bibnamefont {Albano}}, \bibinfo
  {author} {\bibfnamefont {E.}~\bibnamefont {Ibe}}, \bibinfo {author}
  {\bibfnamefont {N.}~\bibnamefont {Miller}},\ and\ \bibinfo {author}
  {\bibfnamefont {M.}~\bibnamefont {O’Leary}},\ }\bibfield  {journal}
  {\bibinfo  {journal} {International Journal of Environmental Research and
  Public Health}\ }\textbf {\bibinfo {volume} {20}},\ \href
  {https://doi.org/10.3390/ijerph20043525} {10.3390/ijerph20043525} (\bibinfo
  {year} {2023})\BibitemShut {NoStop}%
\bibitem [{\citenamefont {McClean}\ \emph {et~al.}(2006)\citenamefont
  {McClean}, \citenamefont {Faddy},\ and\ \citenamefont
  {Millard}}]{McClean2006UsingSystem}%
  \BibitemOpen
  \bibfield  {author} {\bibinfo {author} {\bibfnamefont {S.}~\bibnamefont
  {McClean}}, \bibinfo {author} {\bibfnamefont {M.}~\bibnamefont {Faddy}},\
  and\ \bibinfo {author} {\bibfnamefont {P.}~\bibnamefont {Millard}},\ }in\
  \href {https://doi.org/10.1109/CBMS.2006.164} {\emph {\bibinfo {booktitle}
  {Proceedings - IEEE Symposium on Computer-Based Medical Systems}}},\ Vol.\
  \bibinfo {volume} {2006}\ (\bibinfo {year} {2006})\BibitemShut {NoStop}%
\bibitem [{\citenamefont {Craig}\ and\ \citenamefont
  {Sendi}(2002)}]{Craig2002EstimationChain}%
  \BibitemOpen
  \bibfield  {author} {\bibinfo {author} {\bibfnamefont {B.~A.}\ \bibnamefont
  {Craig}}\ and\ \bibinfo {author} {\bibfnamefont {P.~P.}\ \bibnamefont
  {Sendi}},\ }\bibfield  {journal} {\bibinfo  {journal} {Health Economics}\
  }\textbf {\bibinfo {volume} {11}},\ \href {https://doi.org/10.1002/hec.654}
  {10.1002/hec.654} (\bibinfo {year} {2002})\BibitemShut {NoStop}%
\bibitem [{\citenamefont {Srivastava}\ \emph {et~al.}(2021)\citenamefont
  {Srivastava}, \citenamefont {Latimer},\ and\ \citenamefont
  {Tappenden}}]{Srivastava2021EstimationAppraisals}%
  \BibitemOpen
  \bibfield  {author} {\bibinfo {author} {\bibfnamefont {T.}~\bibnamefont
  {Srivastava}}, \bibinfo {author} {\bibfnamefont {N.~R.}\ \bibnamefont
  {Latimer}},\ and\ \bibinfo {author} {\bibfnamefont {P.}~\bibnamefont
  {Tappenden}},\ }\bibfield  {journal} {\bibinfo  {journal}
  {PharmacoEconomics}\ }\textbf {\bibinfo {volume} {39}},\ \href
  {https://doi.org/10.1007/s40273-021-01034-5} {10.1007/s40273-021-01034-5}
  (\bibinfo {year} {2021})\BibitemShut {NoStop}%
\bibitem [{NIC(2020)}]{NICE2020}%
  \BibitemOpen
  \href {https://www.nice.org.uk/guidance/ta624} {\emph {\bibinfo {title}
  {{Peginterferon beta-1a for treating relapsing–remitting multiple
  sclerosis}}}},\ \bibinfo {type} {Tech. Rep.}\ (\bibinfo  {institution}
  {NICE},\ \bibinfo {year} {2020})\BibitemShut {NoStop}%
\bibitem [{\citenamefont {Gatta}\ \emph {et~al.}(2017)\citenamefont {Gatta},
  \citenamefont {Lenkowicz}, \citenamefont {Vallati}, \citenamefont {Rojas},
  \citenamefont {Damiani}, \citenamefont {Sacchi}, \citenamefont {De~Bari},
  \citenamefont {Dagliati}, \citenamefont {Fernandez-Llatas}, \citenamefont
  {Montesi}, \citenamefont {Marchetti}, \citenamefont {Castellano},\ and\
  \citenamefont {Valentini}}]{Gatta2017PMineR:Medicine}%
  \BibitemOpen
  \bibfield  {author} {\bibinfo {author} {\bibfnamefont {R.}~\bibnamefont
  {Gatta}}, \bibinfo {author} {\bibfnamefont {J.}~\bibnamefont {Lenkowicz}},
  \bibinfo {author} {\bibfnamefont {M.}~\bibnamefont {Vallati}}, \bibinfo
  {author} {\bibfnamefont {E.}~\bibnamefont {Rojas}}, \bibinfo {author}
  {\bibfnamefont {A.}~\bibnamefont {Damiani}}, \bibinfo {author} {\bibfnamefont
  {L.}~\bibnamefont {Sacchi}}, \bibinfo {author} {\bibfnamefont
  {B.}~\bibnamefont {De~Bari}}, \bibinfo {author} {\bibfnamefont
  {A.}~\bibnamefont {Dagliati}}, \bibinfo {author} {\bibfnamefont
  {C.}~\bibnamefont {Fernandez-Llatas}}, \bibinfo {author} {\bibfnamefont
  {M.}~\bibnamefont {Montesi}}, \bibinfo {author} {\bibfnamefont
  {A.}~\bibnamefont {Marchetti}}, \bibinfo {author} {\bibfnamefont
  {M.}~\bibnamefont {Castellano}},\ and\ \bibinfo {author} {\bibfnamefont
  {V.}~\bibnamefont {Valentini}},\ }\href
  {https://doi.org/10.1007/978-3-319-59758-4{\_}42/FIGURES/2} {\bibfield
  {journal} {\bibinfo  {journal} {Lecture Notes in Computer Science (including
  subseries Lecture Notes in Artificial Intelligence and Lecture Notes in
  Bioinformatics)}\ }\textbf {\bibinfo {volume} {10259 LNAI}},\ \bibinfo
  {pages} {351} (\bibinfo {year} {2017})}\BibitemShut {NoStop}%
\bibitem [{\citenamefont {Marazza}\ \emph {et~al.}(2019)\citenamefont
  {Marazza}, \citenamefont {Bukhsh}, \citenamefont {Vijlbrief}, \citenamefont
  {Geerdink}, \citenamefont {Pathak}, \citenamefont {van Keulen},\ and\
  \citenamefont {Seifert}}]{Marazza2019ComparingCare}%
  \BibitemOpen
  \bibfield  {author} {\bibinfo {author} {\bibfnamefont {F.}~\bibnamefont
  {Marazza}}, \bibinfo {author} {\bibfnamefont {F.~A.}\ \bibnamefont {Bukhsh}},
  \bibinfo {author} {\bibfnamefont {O.}~\bibnamefont {Vijlbrief}}, \bibinfo
  {author} {\bibfnamefont {J.}~\bibnamefont {Geerdink}}, \bibinfo {author}
  {\bibfnamefont {S.}~\bibnamefont {Pathak}}, \bibinfo {author} {\bibfnamefont
  {M.}~\bibnamefont {van Keulen}},\ and\ \bibinfo {author} {\bibfnamefont
  {C.}~\bibnamefont {Seifert}},\ }\href
  {https://doi.org/10.1007/978-3-030-37453-2{\_}40/FIGURES/5} {\bibfield
  {journal} {\bibinfo  {journal} {Lecture Notes in Business Information
  Processing}\ }\textbf {\bibinfo {volume} {362 LNBIP}},\ \bibinfo {pages}
  {496} (\bibinfo {year} {2019})}\BibitemShut {NoStop}%
\bibitem [{\citenamefont {Ghahfarokhi}\ \emph {et~al.}(2021)\citenamefont
  {Ghahfarokhi}, \citenamefont {Berti},\ and\ \citenamefont {van~der
  Aalst}}]{Ghahfarokhi2021ProcessCubes}%
  \BibitemOpen
  \bibfield  {author} {\bibinfo {author} {\bibfnamefont {A.~F.}\ \bibnamefont
  {Ghahfarokhi}}, \bibinfo {author} {\bibfnamefont {A.}~\bibnamefont {Berti}},\
  and\ \bibinfo {author} {\bibfnamefont {W.~M.~P.}\ \bibnamefont {van~der
  Aalst}},\ }\href {http://arxiv.org/abs/2103.07184} {\bibfield  {journal}
  {\bibinfo  {journal} {International Journal of Mechanical and Industrial
  Engineering}\ }\textbf {\bibinfo {volume} {14}} (\bibinfo {year}
  {2021})}\BibitemShut {NoStop}%
\bibitem [{\citenamefont {Vallati}\ \emph {et~al.}(2023)\citenamefont
  {Vallati}, \citenamefont {Orini}, \citenamefont {Lorusso}, \citenamefont
  {Savino}, \citenamefont {Gatta},\ and\ \citenamefont
  {Filosto}}]{Vallati2023OnStudy}%
  \BibitemOpen
  \bibfield  {author} {\bibinfo {author} {\bibfnamefont {M.}~\bibnamefont
  {Vallati}}, \bibinfo {author} {\bibfnamefont {S.}~\bibnamefont {Orini}},
  \bibinfo {author} {\bibfnamefont {M.}~\bibnamefont {Lorusso}}, \bibinfo
  {author} {\bibfnamefont {M.}~\bibnamefont {Savino}}, \bibinfo {author}
  {\bibfnamefont {R.}~\bibnamefont {Gatta}},\ and\ \bibinfo {author}
  {\bibfnamefont {M.}~\bibnamefont {Filosto}},\ }\bibfield  {journal} {\bibinfo
   {journal} {The International FLAIRS Conference Proceedings}\ }\textbf
  {\bibinfo {volume} {36}},\ \href {https://doi.org/10.32473/FLAIRS.36.133049}
  {10.32473/FLAIRS.36.133049} (\bibinfo {year} {2023})\BibitemShut {NoStop}%
\bibitem [{\citenamefont {Lyngs{\o}}\ \emph {et~al.}(1999)\citenamefont
  {Lyngs{\o}}, \citenamefont {Pedersen},\ and\ \citenamefont
  {Nielsen}}]{Lyngs1999MetricsModels}%
  \BibitemOpen
  \bibfield  {author} {\bibinfo {author} {\bibfnamefont {R.~B.}\ \bibnamefont
  {Lyngs{\o}}}, \bibinfo {author} {\bibfnamefont {C.~N.}\ \bibnamefont
  {Pedersen}},\ and\ \bibinfo {author} {\bibfnamefont {H.}~\bibnamefont
  {Nielsen}},\ }in\ \href@noop {} {\emph {\bibinfo {booktitle} {Proceedings of
  the 7th International Conference on Intelligent Systems for Molecular
  Biology, ISMB 1999}}}\ (\bibinfo {year} {1999})\BibitemShut {NoStop}%
\bibitem [{\citenamefont {Zeng}\ \emph {et~al.}(2010)\citenamefont {Zeng},
  \citenamefont {Duan},\ and\ \citenamefont {Wu}}]{Zeng2010AModels}%
  \BibitemOpen
  \bibfield  {author} {\bibinfo {author} {\bibfnamefont {J.}~\bibnamefont
  {Zeng}}, \bibinfo {author} {\bibfnamefont {J.}~\bibnamefont {Duan}},\ and\
  \bibinfo {author} {\bibfnamefont {C.}~\bibnamefont {Wu}},\ }\bibfield
  {journal} {\bibinfo  {journal} {Expert Systems with Applications}\ }\textbf
  {\bibinfo {volume} {37}},\ \href {https://doi.org/10.1016/j.eswa.2009.06.063}
  {10.1016/j.eswa.2009.06.063} (\bibinfo {year} {2010})\BibitemShut {NoStop}%
\bibitem [{\citenamefont {Daca}\ \emph {et~al.}(2016)\citenamefont {Daca},
  \citenamefont {Henzinger}, \citenamefont {K{\v{r}}et{\'{i}}nsk{\'{y}}},\ and\
  \citenamefont {Petrov}}]{Daca2016LinearChains}%
  \BibitemOpen
  \bibfield  {author} {\bibinfo {author} {\bibfnamefont {P.}~\bibnamefont
  {Daca}}, \bibinfo {author} {\bibfnamefont {T.~A.}\ \bibnamefont {Henzinger}},
  \bibinfo {author} {\bibfnamefont {J.}~\bibnamefont
  {K{\v{r}}et{\'{i}}nsk{\'{y}}}},\ and\ \bibinfo {author} {\bibfnamefont
  {T.}~\bibnamefont {Petrov}},\ }in\ \href
  {https://doi.org/10.4230/LIPIcs.CONCUR.2016.20} {\emph {\bibinfo {booktitle}
  {Leibniz International Proceedings in Informatics, LIPIcs}}},\ Vol.~\bibinfo
  {volume} {59}\ (\bibinfo {year} {2016})\BibitemShut {NoStop}%
\bibitem [{\citenamefont {Jaeger}\ \emph {et~al.}(2014)\citenamefont {Jaeger},
  \citenamefont {Mao}, \citenamefont {Guldstrand~Larsen},\ and\ \citenamefont
  {Mardare}}]{Jaeger2014ContinuityProcesses}%
  \BibitemOpen
  \bibfield  {author} {\bibinfo {author} {\bibfnamefont {M.}~\bibnamefont
  {Jaeger}}, \bibinfo {author} {\bibfnamefont {H.}~\bibnamefont {Mao}},
  \bibinfo {author} {\bibfnamefont {K.}~\bibnamefont {Guldstrand~Larsen}},\
  and\ \bibinfo {author} {\bibfnamefont {R.}~\bibnamefont {Mardare}},\ }in\
  \href {https://doi.org/10.1007/978-3-319-10696-0{\_}24} {\emph {\bibinfo
  {booktitle} {Lecture Notes in Computer Science (including subseries Lecture
  Notes in Artificial Intelligence and Lecture Notes in Bioinformatics)}}},\
  Vol.\ \bibinfo {volume} {8657 LNCS}\ (\bibinfo  {publisher} {Springer
  International Publishing},\ \bibinfo {year} {2014})\ pp.\ \bibinfo {pages}
  {297--312}\BibitemShut {NoStop}%
\bibitem [{\citenamefont {Amari}(2001)}]{Amari2001InformationDistributions}%
  \BibitemOpen
  \bibfield  {author} {\bibinfo {author} {\bibfnamefont {S.~I.}\ \bibnamefont
  {Amari}},\ }\href {https://doi.org/10.1109/18.930911} {\bibfield  {journal}
  {\bibinfo  {journal} {IEEE Transactions on Information Theory}\ }\textbf
  {\bibinfo {volume} {47}},\ \bibinfo {pages} {1701} (\bibinfo {year}
  {2001})}\BibitemShut {NoStop}%
\bibitem [{\citenamefont {Amari}(2021)}]{Amari2021InformationGeometry}%
  \BibitemOpen
  \bibfield  {author} {\bibinfo {author} {\bibfnamefont {S.~i.}\ \bibnamefont
  {Amari}},\ }\href {https://doi.org/10.1007/S11537-020-1920-5/METRICS}
  {\bibfield  {journal} {\bibinfo  {journal} {Japanese Journal of Mathematics}\
  }\textbf {\bibinfo {volume} {16}},\ \bibinfo {pages} {1} (\bibinfo {year}
  {2021})}\BibitemShut {NoStop}%
\bibitem [{\citenamefont {Cohen}\ \emph {et~al.}(1998)\citenamefont {Cohen},
  \citenamefont {Kemperman},\ and\ \citenamefont
  {Zb{\u{a}}ganu}}]{Cohen1998ComparisonsSciences}%
  \BibitemOpen
  \bibfield  {author} {\bibinfo {author} {\bibfnamefont {J.~E.}\ \bibnamefont
  {Cohen}}, \bibinfo {author} {\bibfnamefont {J.~H.~B.}\ \bibnamefont
  {Kemperman}},\ and\ \bibinfo {author} {\bibfnamefont {G.}~\bibnamefont
  {Zb{\u{a}}ganu}},\ }\href {https://link.springer.com/book/9780817640828}
  {\emph {\bibinfo {title} {{Comparisons of Stochastic Matrices with
  Applications in Information Theory, Statistics, Economics and Population
  Sciences}}}},\ \bibinfo {edition} {1st}\ ed.\ (\bibinfo  {publisher}
  {Birkh{\"{a}}user Boston, MA},\ \bibinfo {year} {1998})\BibitemShut {NoStop}%
\bibitem [{\citenamefont {Zhou}\ \emph {et~al.}(2021)\citenamefont {Zhou},
  \citenamefont {Yuan}, \citenamefont {Yang},\ and\ \citenamefont
  {Wang}}]{Zhou2021WhoModel}%
  \BibitemOpen
  \bibfield  {author} {\bibinfo {author} {\bibfnamefont {Y.}~\bibnamefont
  {Zhou}}, \bibinfo {author} {\bibfnamefont {Q.}~\bibnamefont {Yuan}}, \bibinfo
  {author} {\bibfnamefont {C.}~\bibnamefont {Yang}},\ and\ \bibinfo {author}
  {\bibfnamefont {Y.}~\bibnamefont {Wang}},\ }\bibfield  {journal} {\bibinfo
  {journal} {Travel Behaviour and Society}\ }\textbf {\bibinfo {volume} {24}},\
  \href {https://doi.org/10.1016/j.tbs.2021.03.005} {10.1016/j.tbs.2021.03.005}
  (\bibinfo {year} {2021})\BibitemShut {NoStop}%
\bibitem [{\citenamefont {Sanders}\ \emph {et~al.}(2020)\citenamefont
  {Sanders}, \citenamefont {Prouti{\`{e}}re},\ and\ \citenamefont
  {Yun}}]{Sanders2020ClusteringChains}%
  \BibitemOpen
  \bibfield  {author} {\bibinfo {author} {\bibfnamefont {J.}~\bibnamefont
  {Sanders}}, \bibinfo {author} {\bibfnamefont {A.}~\bibnamefont
  {Prouti{\`{e}}re}},\ and\ \bibinfo {author} {\bibfnamefont {S.-Y.}\
  \bibnamefont {Yun}},\ }\bibfield  {journal} {\bibinfo  {journal} {The Annals
  of Statistics}\ }\textbf {\bibinfo {volume} {48}},\ \href
  {https://doi.org/10.1214/19-AOS1939} {10.1214/19-AOS1939} (\bibinfo {year}
  {2020})\BibitemShut {NoStop}%
\bibitem [{\citenamefont {Kawabata}\ and\ \citenamefont
  {Nishikawa}(2000)}]{Kawabata2000ProteinEvolution}%
  \BibitemOpen
  \bibfield  {author} {\bibinfo {author} {\bibfnamefont {T.}~\bibnamefont
  {Kawabata}}\ and\ \bibinfo {author} {\bibfnamefont {K.}~\bibnamefont
  {Nishikawa}},\ }\bibfield  {journal} {\bibinfo  {journal} {Proteins:
  Structure, Function and Genetics}\ }\textbf {\bibinfo {volume} {41}},\ \href
  {https://doi.org/10.1002/1097-0134(20001001)41:1<108::AID-PROT130>3.0.CO;2-S}
  {10.1002/1097-0134(20001001)41:1<108::AID-PROT130>3.0.CO;2-S} (\bibinfo
  {year} {2000})\BibitemShut {NoStop}%
\bibitem [{\citenamefont {He}\ and\ \citenamefont
  {McAuley}(2016)}]{He2016FusingRecommendation}%
  \BibitemOpen
  \bibfield  {author} {\bibinfo {author} {\bibfnamefont {R.}~\bibnamefont
  {He}}\ and\ \bibinfo {author} {\bibfnamefont {J.}~\bibnamefont {McAuley}},\
  }in\ \href {https://doi.org/10.1109/ICDM.2016.88} {\emph {\bibinfo
  {booktitle} {Proceedings - IEEE International Conference on Data Mining,
  ICDM}}},\ Vol.~\bibinfo {volume} {0}\ (\bibinfo {year} {2016})\BibitemShut
  {NoStop}%
\bibitem [{\citenamefont {Wang}\ \emph {et~al.}(2020)\citenamefont {Wang},
  \citenamefont {Wang}, \citenamefont {Liang}, \citenamefont {Zhang},
  \citenamefont {Huang},\ and\ \citenamefont
  {Zhang}}]{Wang2020W-Trans:Recognition}%
  \BibitemOpen
  \bibfield  {author} {\bibinfo {author} {\bibfnamefont {C.}~\bibnamefont
  {Wang}}, \bibinfo {author} {\bibfnamefont {B.}~\bibnamefont {Wang}}, \bibinfo
  {author} {\bibfnamefont {H.}~\bibnamefont {Liang}}, \bibinfo {author}
  {\bibfnamefont {J.}~\bibnamefont {Zhang}}, \bibinfo {author} {\bibfnamefont
  {W.}~\bibnamefont {Huang}},\ and\ \bibinfo {author} {\bibfnamefont
  {W.}~\bibnamefont {Zhang}},\ }\bibfield  {journal} {\bibinfo  {journal} {IEEE
  Access}\ }\textbf {\bibinfo {volume} {8}},\ \href
  {https://doi.org/10.1109/ACCESS.2020.2984456} {10.1109/ACCESS.2020.2984456}
  (\bibinfo {year} {2020})\BibitemShut {NoStop}%
\bibitem [{\citenamefont {Yao}\ \emph {et~al.}(2020)\citenamefont {Yao},
  \citenamefont {Liu}, \citenamefont {Han}, \citenamefont {Gong}, \citenamefont
  {Deng}, \citenamefont {Niu},\ and\ \citenamefont
  {Sugiyama}}]{Yao2020DualLearning}%
  \BibitemOpen
  \bibfield  {author} {\bibinfo {author} {\bibfnamefont {Y.}~\bibnamefont
  {Yao}}, \bibinfo {author} {\bibfnamefont {T.}~\bibnamefont {Liu}}, \bibinfo
  {author} {\bibfnamefont {B.}~\bibnamefont {Han}}, \bibinfo {author}
  {\bibfnamefont {M.}~\bibnamefont {Gong}}, \bibinfo {author} {\bibfnamefont
  {J.}~\bibnamefont {Deng}}, \bibinfo {author} {\bibfnamefont {G.}~\bibnamefont
  {Niu}},\ and\ \bibinfo {author} {\bibfnamefont {M.}~\bibnamefont
  {Sugiyama}},\ }in\ \href@noop {} {\emph {\bibinfo {booktitle} {Advances in
  Neural Information Processing Systems}}},\ Vol.\ \bibinfo {volume}
  {2020-December}\ (\bibinfo {year} {2020})\BibitemShut {NoStop}%
\bibitem [{\citenamefont {Zou}\ \emph {et~al.}(2016)\citenamefont {Zou},
  \citenamefont {Liu}, \citenamefont {Liu},\ and\ \citenamefont
  {Sun}}]{Zou2016ReinforcementVehicle}%
  \BibitemOpen
  \bibfield  {author} {\bibinfo {author} {\bibfnamefont {Y.}~\bibnamefont
  {Zou}}, \bibinfo {author} {\bibfnamefont {T.}~\bibnamefont {Liu}}, \bibinfo
  {author} {\bibfnamefont {D.}~\bibnamefont {Liu}},\ and\ \bibinfo {author}
  {\bibfnamefont {F.}~\bibnamefont {Sun}},\ }\bibfield  {journal} {\bibinfo
  {journal} {Applied Energy}\ }\textbf {\bibinfo {volume} {171}},\ \href
  {https://doi.org/10.1016/j.apenergy.2016.03.082}
  {10.1016/j.apenergy.2016.03.082} (\bibinfo {year} {2016})\BibitemShut
  {NoStop}%
\bibitem [{\citenamefont {Du}\ \emph {et~al.}(2019)\citenamefont {Du},
  \citenamefont {Zou}, \citenamefont {Zhang}, \citenamefont {Kong},
  \citenamefont {Wu},\ and\ \citenamefont {He}}]{Du2019IntelligentLearning}%
  \BibitemOpen
  \bibfield  {author} {\bibinfo {author} {\bibfnamefont {G.}~\bibnamefont
  {Du}}, \bibinfo {author} {\bibfnamefont {Y.}~\bibnamefont {Zou}}, \bibinfo
  {author} {\bibfnamefont {X.}~\bibnamefont {Zhang}}, \bibinfo {author}
  {\bibfnamefont {Z.}~\bibnamefont {Kong}}, \bibinfo {author} {\bibfnamefont
  {J.}~\bibnamefont {Wu}},\ and\ \bibinfo {author} {\bibfnamefont
  {D.}~\bibnamefont {He}},\ }\bibfield  {journal} {\bibinfo  {journal} {Applied
  Energy}\ }\textbf {\bibinfo {volume} {251}},\ \href
  {https://doi.org/10.1016/j.apenergy.2019.113388}
  {10.1016/j.apenergy.2019.113388} (\bibinfo {year} {2019})\BibitemShut
  {NoStop}%
\bibitem [{\citenamefont {Austin}\ \emph {et~al.}(2021)\citenamefont {Austin},
  \citenamefont {Johnson}, \citenamefont {Ho}, \citenamefont {Tarlow},\ and\
  \citenamefont {Van Den~Berg}}]{Austin2021StructuredState-Spaces}%
  \BibitemOpen
  \bibfield  {author} {\bibinfo {author} {\bibfnamefont {J.}~\bibnamefont
  {Austin}}, \bibinfo {author} {\bibfnamefont {D.~D.}\ \bibnamefont {Johnson}},
  \bibinfo {author} {\bibfnamefont {J.}~\bibnamefont {Ho}}, \bibinfo {author}
  {\bibfnamefont {D.}~\bibnamefont {Tarlow}},\ and\ \bibinfo {author}
  {\bibfnamefont {R.}~\bibnamefont {Van Den~Berg}},\ }in\ \href@noop {} {\emph
  {\bibinfo {booktitle} {Advances in Neural Information Processing Systems}}},\
  Vol.~\bibinfo {volume} {22}\ (\bibinfo {year} {2021})\BibitemShut {NoStop}%
\bibitem [{\citenamefont {Jabri}\ \emph {et~al.}(2020)\citenamefont {Jabri},
  \citenamefont {Owens},\ and\ \citenamefont
  {Efros}}]{Jabri2020Space-timeWalk}%
  \BibitemOpen
  \bibfield  {author} {\bibinfo {author} {\bibfnamefont {A.~A.}\ \bibnamefont
  {Jabri}}, \bibinfo {author} {\bibfnamefont {A.}~\bibnamefont {Owens}},\ and\
  \bibinfo {author} {\bibfnamefont {A.~A.}\ \bibnamefont {Efros}},\ }in\ \href
  {https://dl.acm.org/doi/10.5555/3495724.3497364} {\emph {\bibinfo {booktitle}
  {Advances in Neural Information Processing Systems}}},\ Vol.\ \bibinfo
  {volume} {2020-December}\ (\bibinfo {year} {2020})\BibitemShut {NoStop}%
\bibitem [{\citenamefont {Nevat}\ \emph {et~al.}(2018)\citenamefont {Nevat},
  \citenamefont {Divakaran}, \citenamefont {Nagarajan}, \citenamefont {Zhang},
  \citenamefont {Su}, \citenamefont {Ko},\ and\ \citenamefont
  {Thing}}]{Nevat2018AnomalyTraffic}%
  \BibitemOpen
  \bibfield  {author} {\bibinfo {author} {\bibfnamefont {I.}~\bibnamefont
  {Nevat}}, \bibinfo {author} {\bibfnamefont {D.~M.}\ \bibnamefont
  {Divakaran}}, \bibinfo {author} {\bibfnamefont {S.~G.}\ \bibnamefont
  {Nagarajan}}, \bibinfo {author} {\bibfnamefont {P.}~\bibnamefont {Zhang}},
  \bibinfo {author} {\bibfnamefont {L.}~\bibnamefont {Su}}, \bibinfo {author}
  {\bibfnamefont {L.~L.}\ \bibnamefont {Ko}},\ and\ \bibinfo {author}
  {\bibfnamefont {V.~L.}\ \bibnamefont {Thing}},\ }\bibfield  {journal}
  {\bibinfo  {journal} {IEEE/ACM Transactions on Networking}\ }\textbf
  {\bibinfo {volume} {26}},\ \href {https://doi.org/10.1109/TNET.2017.2765719}
  {10.1109/TNET.2017.2765719} (\bibinfo {year} {2018})\BibitemShut {NoStop}%
\bibitem [{\citenamefont {Bacallado}\ \emph {et~al.}(2009)\citenamefont
  {Bacallado}, \citenamefont {Chodera},\ and\ \citenamefont
  {Pande}}]{Bacallado2009BayesianConstraint}%
  \BibitemOpen
  \bibfield  {author} {\bibinfo {author} {\bibfnamefont {S.}~\bibnamefont
  {Bacallado}}, \bibinfo {author} {\bibfnamefont {J.~D.}\ \bibnamefont
  {Chodera}},\ and\ \bibinfo {author} {\bibfnamefont {V.}~\bibnamefont
  {Pande}},\ }\bibfield  {journal} {\bibinfo  {journal} {Journal of Chemical
  Physics}\ }\textbf {\bibinfo {volume} {131}},\ \href
  {https://doi.org/10.1063/1.3192309} {10.1063/1.3192309} (\bibinfo {year}
  {2009})\BibitemShut {NoStop}%
\bibitem [{\citenamefont {Strelioff}\ \emph {et~al.}(2007)\citenamefont
  {Strelioff}, \citenamefont {Crutchfield},\ and\ \citenamefont
  {H{\"{u}}bler}}]{Strelioff2007InferringModeling}%
  \BibitemOpen
  \bibfield  {author} {\bibinfo {author} {\bibfnamefont {C.~C.}\ \bibnamefont
  {Strelioff}}, \bibinfo {author} {\bibfnamefont {J.~P.}\ \bibnamefont
  {Crutchfield}},\ and\ \bibinfo {author} {\bibfnamefont {A.~W.}\ \bibnamefont
  {H{\"{u}}bler}},\ }\bibfield  {journal} {\bibinfo  {journal} {Physical Review
  E - Statistical, Nonlinear, and Soft Matter Physics}\ }\textbf {\bibinfo
  {volume} {76}},\ \href {https://doi.org/10.1103/PhysRevE.76.011106}
  {10.1103/PhysRevE.76.011106} (\bibinfo {year} {2007})\BibitemShut {NoStop}%
\bibitem [{\citenamefont {Wolfer}\ and\ \citenamefont
  {Watanabe}(2023)}]{Wolfer2023InformationSurvey}%
  \BibitemOpen
  \bibfield  {author} {\bibinfo {author} {\bibfnamefont {G.}~\bibnamefont
  {Wolfer}}\ and\ \bibinfo {author} {\bibfnamefont {S.}~\bibnamefont
  {Watanabe}},\ }\href {https://doi.org/10.3389/FPHY.2023.1195562/BIBTEX}
  {\bibfield  {journal} {\bibinfo  {journal} {Frontiers in Physics}\ }\textbf
  {\bibinfo {volume} {11}},\ \bibinfo {pages} {1195562} (\bibinfo {year}
  {2023})}\BibitemShut {NoStop}%
\bibitem [{\citenamefont {Kiefer}(2018)}]{Kiefer2018OnModels}%
  \BibitemOpen
  \bibfield  {author} {\bibinfo {author} {\bibfnamefont {S.}~\bibnamefont
  {Kiefer}},\ }in\ \href {https://doi.org/10.4230/LIPIcs.ICALP.2018.130} {\emph
  {\bibinfo {booktitle} {Leibniz International Proceedings in Informatics,
  LIPIcs}}},\ Vol.\ \bibinfo {volume} {107}\ (\bibinfo {year}
  {2018})\BibitemShut {NoStop}%
\bibitem [{\citenamefont {Nielsen}\ and\ \citenamefont
  {Sun}(2017)}]{Nielsen2017ClusteringGeometry}%
  \BibitemOpen
  \bibfield  {author} {\bibinfo {author} {\bibfnamefont {F.}~\bibnamefont
  {Nielsen}}\ and\ \bibinfo {author} {\bibfnamefont {K.}~\bibnamefont {Sun}},\
  }\href {https://doi.org/10.1007/978-3-030-02520-5{\_}11} {\bibfield
  {journal} {\bibinfo  {journal} {Signals and Communication Technology}\ ,\
  \bibinfo {pages} {297}} (\bibinfo {year} {2017})}\BibitemShut {NoStop}%
\bibitem [{\citenamefont {Miyamoto}\ \emph {et~al.}(2024)\citenamefont
  {Miyamoto}, \citenamefont {Meneghetti}, \citenamefont {Pinele},\ and\
  \citenamefont {Costa}}]{Miyamoto2024OnDistance}%
  \BibitemOpen
  \bibfield  {author} {\bibinfo {author} {\bibfnamefont {H.~K.}\ \bibnamefont
  {Miyamoto}}, \bibinfo {author} {\bibfnamefont {F.~C.~C.}\ \bibnamefont
  {Meneghetti}}, \bibinfo {author} {\bibfnamefont {J.}~\bibnamefont {Pinele}},\
  and\ \bibinfo {author} {\bibfnamefont {S.~I.~R.}\ \bibnamefont {Costa}},\
  }\bibfield  {journal} {\bibinfo  {journal} {Information Geometry}\ }\href
  {https://doi.org/10.1007/s41884-024-00143-2} {10.1007/s41884-024-00143-2}
  (\bibinfo {year} {2024})\BibitemShut {NoStop}%
\bibitem [{\citenamefont
  {Bhattacharyya}(1946)}]{Bhattacharyya1946OnDistributions}%
  \BibitemOpen
  \bibfield  {author} {\bibinfo {author} {\bibfnamefont {A.}~\bibnamefont
  {Bhattacharyya}},\ }\href {https://www.jstor.org/stable/25047882} {\bibfield
  {journal} {\bibinfo  {journal} {The Indian Journal of Statistics
  (1933-1960)}\ }\textbf {\bibinfo {volume} {7}} (\bibinfo {year}
  {1946})}\BibitemShut {NoStop}%
\bibitem [{\citenamefont {Aherne}\ \emph {et~al.}(1998)\citenamefont {Aherne},
  \citenamefont {Thacker},\ and\ \citenamefont {Rockett}}]{Aherne1998TheData}%
  \BibitemOpen
  \bibfield  {author} {\bibinfo {author} {\bibfnamefont {F.~J.}\ \bibnamefont
  {Aherne}}, \bibinfo {author} {\bibfnamefont {N.~A.}\ \bibnamefont
  {Thacker}},\ and\ \bibinfo {author} {\bibfnamefont {P.~I.}\ \bibnamefont
  {Rockett}},\ }\href@noop {} {\bibfield  {journal} {\bibinfo  {journal}
  {Kybernetika}\ }\textbf {\bibinfo {volume} {34}} (\bibinfo {year}
  {1998})}\BibitemShut {NoStop}%
\bibitem [{\citenamefont {Bi}\ \emph {et~al.}(2019)\citenamefont {Bi},
  \citenamefont {Broggi},\ and\ \citenamefont {Beer}}]{Bi2019TheUpdating}%
  \BibitemOpen
  \bibfield  {author} {\bibinfo {author} {\bibfnamefont {S.}~\bibnamefont
  {Bi}}, \bibinfo {author} {\bibfnamefont {M.}~\bibnamefont {Broggi}},\ and\
  \bibinfo {author} {\bibfnamefont {M.}~\bibnamefont {Beer}},\ }\bibfield
  {journal} {\bibinfo  {journal} {Mechanical Systems and Signal Processing}\
  }\textbf {\bibinfo {volume} {117}},\ \href
  {https://doi.org/10.1016/j.ymssp.2018.08.017} {10.1016/j.ymssp.2018.08.017}
  (\bibinfo {year} {2019})\BibitemShut {NoStop}%
\bibitem [{\citenamefont {Kazakos}(1978)}]{Kazakos1978TheChains}%
  \BibitemOpen
  \bibfield  {author} {\bibinfo {author} {\bibfnamefont {D.}~\bibnamefont
  {Kazakos}},\ }\href {https://doi.org/10.1109/TIT.1978.1055967} {\bibfield
  {journal} {\bibinfo  {journal} {IEEE Transactions on Information Theory}\
  }\textbf {\bibinfo {volume} {24}},\ \bibinfo {pages} {747} (\bibinfo {year}
  {1978})}\BibitemShut {NoStop}%
\bibitem [{\citenamefont {Rached}\ \emph {et~al.}(2004)\citenamefont {Rached},
  \citenamefont {Alajaji},\ and\ \citenamefont
  {Campbell}}]{Rached2004TheSources}%
  \BibitemOpen
  \bibfield  {author} {\bibinfo {author} {\bibfnamefont {Z.}~\bibnamefont
  {Rached}}, \bibinfo {author} {\bibfnamefont {F.}~\bibnamefont {Alajaji}},\
  and\ \bibinfo {author} {\bibfnamefont {L.}~\bibnamefont {Campbell}},\ }\href
  {https://doi.org/10.1109/TIT.2004.826687} {\bibfield  {journal} {\bibinfo
  {journal} {IEEE Transactions on Information Theory}\ }\textbf {\bibinfo
  {volume} {50}},\ \bibinfo {pages} {917} (\bibinfo {year} {2004})}\BibitemShut
  {NoStop}%
\bibitem [{\citenamefont {Munem}\ and\ \citenamefont
  {Foulis}(1984)}]{Munem1984CalculusMunem}%
  \BibitemOpen
  \bibfield  {author} {\bibinfo {author} {\bibfnamefont {M.~A.}\ \bibnamefont
  {Munem}}\ and\ \bibinfo {author} {\bibfnamefont {D.~J.}\ \bibnamefont
  {Foulis}},\ }\href@noop {} {\emph {\bibinfo {title} {{Calculus 2ed
  Munem}}}},\ \bibinfo {edition} {2nd}\ ed.\ (\bibinfo  {publisher} {Worth},\
  \bibinfo {address} {New York, NY},\ \bibinfo {year} {1984})\ p.\ \bibinfo
  {pages} {1048}\BibitemShut {NoStop}%
\bibitem [{\citenamefont {Huang}(2011)}]{Huang2011OnMatrices}%
  \BibitemOpen
  \bibfield  {author} {\bibinfo {author} {\bibfnamefont {Z.}~\bibnamefont
  {Huang}},\ }\bibfield  {journal} {\bibinfo  {journal} {Linear Algebra and Its
  Applications}\ }\textbf {\bibinfo {volume} {434}},\ \href
  {https://doi.org/10.1016/j.laa.2010.08.038} {10.1016/j.laa.2010.08.038}
  (\bibinfo {year} {2011})\BibitemShut {NoStop}%
\bibitem [{\citenamefont {Demb{\'{e}}l{\'{e}}}(2021)}]{Dembele2021AMatrices}%
  \BibitemOpen
  \bibfield  {author} {\bibinfo {author} {\bibfnamefont {D.}~\bibnamefont
  {Demb{\'{e}}l{\'{e}}}},\ }\bibfield  {journal} {\bibinfo  {journal}
  {Numerical Linear Algebra with Applications}\ }\textbf {\bibinfo {volume}
  {28}},\ \href {https://doi.org/10.1002/nla.2340} {10.1002/nla.2340} (\bibinfo
  {year} {2021})\BibitemShut {NoStop}%
\bibitem [{\citenamefont {Montenegro}\ and\ \citenamefont
  {Tetali}(2006)}]{Montenegro2006MathematicalChains}%
  \BibitemOpen
  \bibfield  {author} {\bibinfo {author} {\bibfnamefont {R.}~\bibnamefont
  {Montenegro}}\ and\ \bibinfo {author} {\bibfnamefont {P.}~\bibnamefont
  {Tetali}},\ }\href {https://doi.org/10.1561/0400000003} {\bibinfo {title}
  {{Mathematical aspects of mixing times in Markov Chains}}} (\bibinfo {year}
  {2006})\BibitemShut {NoStop}%
\bibitem [{\citenamefont {Dyer}\ \emph {et~al.}(2006)\citenamefont {Dyer},
  \citenamefont {Goldberg}, \citenamefont {Jerrum},\ and\ \citenamefont
  {Martin}}]{Dyer2006MarkovComparison}%
  \BibitemOpen
  \bibfield  {author} {\bibinfo {author} {\bibfnamefont {M.}~\bibnamefont
  {Dyer}}, \bibinfo {author} {\bibfnamefont {L.~A.}\ \bibnamefont {Goldberg}},
  \bibinfo {author} {\bibfnamefont {M.}~\bibnamefont {Jerrum}},\ and\ \bibinfo
  {author} {\bibfnamefont {R.}~\bibnamefont {Martin}},\ }\bibfield  {journal}
  {\bibinfo  {journal} {Probability Surveys}\ }\textbf {\bibinfo {volume}
  {3}},\ \href {https://doi.org/10.1214/154957806000000041}
  {10.1214/154957806000000041} (\bibinfo {year} {2006})\BibitemShut {NoStop}%
\bibitem [{\citenamefont {Lebanon}(2005)}]{Lebanon2005AxiomaticModels}%
  \BibitemOpen
  \bibfield  {author} {\bibinfo {author} {\bibfnamefont {G.}~\bibnamefont
  {Lebanon}},\ }\href {https://doi.org/10.1109/TIT.2005.844060} {\bibfield
  {journal} {\bibinfo  {journal} {IEEE Transactions on Information Theory}\
  }\textbf {\bibinfo {volume} {51}},\ \bibinfo {pages} {1283} (\bibinfo {year}
  {2005})}\BibitemShut {NoStop}%
\bibitem [{\citenamefont {Mont{\'{u}}far}\ \emph {et~al.}(2014)\citenamefont
  {Mont{\'{u}}far}, \citenamefont {Rauh},\ and\ \citenamefont
  {Ay}}]{Montufar2014OnPolytopes}%
  \BibitemOpen
  \bibfield  {author} {\bibinfo {author} {\bibfnamefont {G.}~\bibnamefont
  {Mont{\'{u}}far}}, \bibinfo {author} {\bibfnamefont {J.}~\bibnamefont
  {Rauh}},\ and\ \bibinfo {author} {\bibfnamefont {N.}~\bibnamefont {Ay}},\
  }\href {https://doi.org/10.3390/E16063207} {\bibfield  {journal} {\bibinfo
  {journal} {Entropy 2014, Vol. 16, Pages 3207-3233}\ }\textbf {\bibinfo
  {volume} {16}},\ \bibinfo {pages} {3207} (\bibinfo {year}
  {2014})}\BibitemShut {NoStop}%
\bibitem [{\citenamefont {Steinley}(2004)}]{Steinley2004PropertiesIndex.}%
  \BibitemOpen
  \bibfield  {author} {\bibinfo {author} {\bibfnamefont {D.}~\bibnamefont
  {Steinley}},\ }\href {https://doi.org/10.1037/1082-989X.9.3.386} {\bibfield
  {journal} {\bibinfo  {journal} {Psychological Methods}\ }\textbf {\bibinfo
  {volume} {9}},\ \bibinfo {pages} {386} (\bibinfo {year} {2004})}\BibitemShut
  {NoStop}%
\bibitem [{\citenamefont {Chen}\ and\ \citenamefont
  {Guestrin}(2016)}]{Chen2016XGBoost:System}%
  \BibitemOpen
  \bibfield  {author} {\bibinfo {author} {\bibfnamefont {T.}~\bibnamefont
  {Chen}}\ and\ \bibinfo {author} {\bibfnamefont {C.}~\bibnamefont
  {Guestrin}},\ }in\ \href {https://doi.org/10.1145/2939672.2939785} {\emph
  {\bibinfo {booktitle} {Proceedings of the ACM SIGKDD International Conference
  on Knowledge Discovery and Data Mining}}},\ Vol.\ \bibinfo {volume}
  {13-17-August-2016}\ (\bibinfo {year} {2016})\BibitemShut {NoStop}%
\end{thebibliography}%

\end{document}